\newtheorem{theorem}{Theorem}[section]
\newtheorem{lem}[theorem]{Lemma}
\newtheorem{prop}[theorem]{Proposition}
\theoremstyle{definition}
\newtheorem{defin}[theorem]{Definition}
\newtheorem{example}[theorem]{Example}
\theoremstyle{remark}
\newtheorem{rem}[theorem]{Remark}
\renewcommand{\rm}[1]{\mathrm{#1}}
\renewcommand{\cal}[1]{\mathcal{#1}}
\newcommand{\bb}[1]{\mathbb{#1}}
\renewcommand{\frak}[1]{\mathfrak{#1}}
\newcommand{\R}{\bb R}
\newcommand{\Z}{\bb Z}
\newcommand{\N}{\bb N}
\newcommand{\calC}{\cal C}
\newcommand{\calO}{\cal O}
\newcommand{\diag}{\mathrm{diag}}
\newcommand{\SL}{\rm{SL}}
\newcommand{\SO}{\rm{SO}}
\newcommand{\GL}{\rm{GL}}
\DeclareMathOperator{\disc}{disc}
\title{Density of shapes of periodic tori in the cubic case}
\author{Nguyen-Thi Dang, Nihar Gargava, Jialun Li}
\date{}
\begin{document}
	\maketitle
	\begin{abstract}
		Consider the compact orbits of the $\mathbb{R}^2$ action of the diagonal group on $\SL(3,\R)/\SL (3,\Z)$, the so-called periodic tori.
		For any periodic torus, the set of periods of the orbit forms a lattice in $\mathbb{R}^2$.
		Such a lattice, re-scaled  to covolume one, gives a shape point in $\SL (2,\R)/\SL(2,\Z)$.
		
		We prove that the shapes of all periodic tori are dense in $\SL (2,\R)/\SL(2,\Z)$. 
		This implies the density of shapes of the unit groups of totally real cubic orders.
	\end{abstract}

	\section{Introduction}
	
	In this paper, we investigate the shapes of periodic tori on $\SL(3,\mathbb{R})/\SL(3,\mathbb{Z})$.
	
	Denote $A$ to be the subgroup of $\SL(3,\mathbb{R})$ of diagonal matrices with positive entries.
	Such a group is a locally compact topological group isomorphic to $\R^2$.
	A \emph{periodic torus} is a compact orbit for the left action of $A$ on $\SL(3,\R)/\SL(3,\Z)$.
    The set of periods of such an orbit  forms a discrete cocompact subgroup of $A$. 
    Any cocompact subgroup of $\R^2$ can be uniquely written as $g \Z^2$ for some $g \in \GL(2,\R)/\GL(2,\Z)$. 
    The \emph{shape} of a periodic torus is the projection of $g\Z^2$ in $\SL(2,\R)/\SL(2,\Z)$. 
    See \cref{de:shape} for a more precise definition.
    
    \paragraph{The space of Weyl chambers}
    
    {Let $M$ be the subgroup of diagonal matrices with entries in $\lbrace \pm 1 \rbrace$. The double coset $ M \backslash \SL(3,\R)/\SL(3,\Z)$ is called the space of Weyl chambers, which is a generalisation of the unit tangent bundle of a hyperbolic surface for higher rank Lie groups (eg. $\SL(d,\R)$ for $d\geq 3$) and the action of $A$ generalises the geodesic flow. Periodic tori also occur as compact orbits for $A \curvearrowright M \backslash \SL(3,\R)/\SL(3,\Z)$. }

	Furthermore, $\SL(3,\R)/\SL(3,\Z)$ is a $4$-fold covering of the space of Weyl chambers and the action of $A$ is equivariant for this covering.
	However, periodic tori in the space of Weyl chambers do not necessarily lift to periodic tori in  $\SL(3,\R)/\SL(3,\Z)$. Arithmetically, periodic tori in $M \backslash \SL(3,\R)/\SL(3,\Z)$ are in bijection with the unit groups of totally real cubic orders, as explained further in Section \ref{ss:orders}. Hence we will mainly consider $M \backslash \SL(3,\R)/\SL(3,\Z)$ and the case without $M$, which requests some modifications, will be treated in \cref{ss:positive_units}.

	\paragraph{Why study the shapes of periodic tori?}
	
    In the $\SL(2,\mathbb{R})$ case, the diagonal subgroup is one dimensional.
    Its left action on $\SL(2,\mathbb{R})/\SL(2,\mathbb{Z})$ is the geodesic flow on the unit tangent bundle of the modular curve $\mathbb{H}/\SL(2,\Z)$. 
    Periodic tori in this case are periodic orbits of the geodesic flow.
    Huber \cite{huberZurAnalytischenTheorie1959}, Margulis \cite{margulisCertainApplicationsErgodic1969} and Sarnak \cite{sarnak1980} among others have computed the asymptotic number of periodic geodesics of bounded length. 
	
%	{\nihar We can have a seperate paragraph talking about the subtleties of this $M$. It looks a bit out of place under this title.}
%	In our case, periodic tori are actions of flat tori (cocompact quotients of $\R^2$) on $\SL(3,\R)/\SL(3,\Z)$.
	
	In our case, periodic tori are cocompact quotients of $A$. % which is isomorphic to $\R^2$.
	Now cocompact quotients of $\R^2$ are topologically equivalent to genus 1 surfaces and we equip them with the quotient euclidean metric.
	Consequently, periodic tori may be ordered using different parameters;  volume or systole - length of the shortest geodesic - for example.
	Any such ordering will lead to a different counting problem i.e. to count the number of periodic tori for which the chosen parameter is bounded.
	The following are some orderings that were previously used for counting periodic tori :
	\begin{itemize}
	    \item The volume ordering, or geometric ordering  \cite{einsiedler_distribution_2009}.
	    \item The systole ordering, or dynamical ordering - the length of the shortest $1-$parameter periodic orbit \cite{spatzier83, deitmar,knieper2005uniqueness,dangEquidistributionCountingPeriodic2023,bonthonneau_srb_2021} 
%	    \item Ordering with respect to the systole in the space of Weyl chambers in the compact case \cite{}.
	\end{itemize}

	%{\nihar Should we add more volume-ordering results?}
	
	Our main motivation to study the shapes of tori is to understand the relation between volume orderings and systole orderings. For instance, when the shape lies within a compact region of the moduli space, the ratio between the systole and the square root of the volume cannot be too small. This relationship might aid in bridging the gap between counting results with the systole ordering to the volume ordering, offering a unified perspective on these two counting results.
	
	Another motivation comes from number theory.
	Periodic tori in $M \backslash \SL(3,\R)/\SL(3,\Z)$ correspond to arithmetic objects, namely the unit groups of totally real cubic orders, as explained further in Section \ref{ss:orders}.
	Therefore the shape of a periodic torus is the shape of the unit group of a totally real cubic order. This bijection is the central theme of the seminal work of \cite{einsiedler_distribution_2011} where an ``arithmetic ordering'' of {tori} based on this connection is considered instead of geometric parameters like volume or systole. The present paper heavily relies on this bijection and was born out of efforts to understand the effect of considering Hecke neighbours of flat tori with respect to this bijection.

	\paragraph{Main result}
		The point of the present paper is to prove \cref{co:ds}.
	This solves a conjecture put forth in the paper \cite{davidShapesUnitLattices2017}, where the authors conjectured the following statement, motivated by numerical experiments 
	and questions of Margulis and Gromov.
	\begin{theorem}
	    \label{co:ds}
		The shapes of periodic tori in $ M\backslash \SL(3,\R)/\SL(3,\Z)$ are dense in $\SL(2,\R)/\SL(2,\Z)$.
%		As a consequence shapes of periodic tori in $M\backslash \SL(3,\R)/\SL(3,\Z)$ are also dense.
	\end{theorem}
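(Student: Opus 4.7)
The proof will proceed through the arithmetic correspondence (to be recalled in Section~\ref{ss:orders}) identifying periodic tori in $M \backslash \SL(3,\R)/\SL(3,\Z)$ with unit groups of totally real cubic orders. I would fix a single totally real cubic field $K$ with ring of integers $\mathcal{O}_K$, and denote by $\Lambda_0 \subset \R^2$ its rank-$2$ log-unit lattice. Every order $\mathcal{O} \subset \mathcal{O}_K$ yields a sublattice $\Lambda_\mathcal{O} := \log|\mathcal{O}^\times|$ of $\Lambda_0$ of finite index, and the shape of the corresponding periodic torus is $[\Lambda_\mathcal{O}] \in \SL(2,\R)/\SL(2,\Z)$. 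It therefore suffices to show that $\{[\Lambda_\mathcal{O}] : \mathcal{O} \subset \mathcal{O}_K\}$ is already dense in $\SL(2,\R)/\SL(2,\Z)$.

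The strategy is to vary $\mathcal{O}$ over the orders $\mathcal{O}_f := \Z + f\mathcal{O}_K$ of conductor $f$. Here $\mathcal{O}_f^\times = \ker\bigl(\mathcal{O}_K^\times \to (\mathcal{O}_K/f\mathcal{O}_K)^\times / (\Z/f\Z)^\times\bigr)$, so $\Lambda_{\mathcal{O}_f}$ is the kernel of an explicit reduction homomorphism from $\Lambda_0$ to a finite abelian group. For $f = p$ a rational prime, this target depends on the splitting of $p$ in $K$: it is $(\F_p^\times)^2$ of order $(p-1)^2$ for completely split $p$, $\F_{p^2}^\times$ of order $p^2-1$ for partially split $p$, and $\F_{p^3}^\times/\F_p^\times$ of order $p^2+p+1$ for inert $p$. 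By Chebotarev there are infinitely many primes of each type, so the shapes $[\Lambda_{\mathcal{O}_p}]$ realise Hecke-type transforms of $[\Lambda_0]$ at a broad range of levels, parametrised by the Frobenius data at $p$.

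Since the full Hecke orbit of any lattice is dense in $\SL(2,\R)/\SL(2,\Z)$ by strong approximation for $\SL_2(\Q)$, the theorem would follow once one verifies that the Frobenius-indexed family $\{[\Lambda_{\mathcal{O}_p}]\}_p$ is not confined to a proper closed subset of this Hecke orbit. This is where I expect the main obstacle to lie: the sublattice $\Lambda_{\mathcal{O}_p}$ is rigidly determined by the arithmetic of $K$ at $p$, and a priori could fall into a thin subset of the Hecke neighbours at a given level. To overcome this, I would combine an effective equidistribution of Frobenius conjugacy classes with an explicit description of $\Lambda_{\mathcal{O}_p}$ as a coset in the relevant Hecke correspondence, aiming to show that as $p \to \infty$ through a fixed splitting type, the shapes $[\Lambda_{\mathcal{O}_p}]$ accumulate densely. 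If gaps remain, one can enrich the family by considering composite conductors (exploiting independent splitting at distinct prime factors) or by letting $K$ range over the infinitely many totally real cubic fields supplied by Davenport--Heilbronn.
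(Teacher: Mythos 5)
Your proposal correctly identifies the arithmetic correspondence with unit groups and the key idea of passing to suborders $\Z + f\calO$, but the density argument has a genuine gap that you partially acknowledge, and the route you are sketching to close it is not the one that works.

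The central problem is that for each conductor $f$, the lattice $\Lambda_{\calO_f}$ is a \emph{single} prescribed sublattice of $\Lambda_0$, not a whole Hecke packet. Strong approximation for $\SL_2(\Q)$ gives density of the full Hecke orbit of a point, but you only have one specific Hecke neighbour per level, determined rigidly by the image of $\calO_K^\times$ in $(\calO_K/f\calO_K)^\times$. Varying $p$ over a fixed Frobenius type does not obviously give more than a one-parameter family, and as you note yourself, it could well sit in a thin subset. Moreover, fixing a single field $K$ pins down the base shape $[\Lambda_0]$, and there is no mechanism in your sketch to move it: the regulator lattice of a fixed $\calO_K$ is one fixed point of $\SL(2,\R)/\SL(2,\Z)$. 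The paper resolves exactly this difficulty by \emph{not} fixing $K$: it varies over the family of simplest cubic fields $K = \Q[X]/\langle X(X-a_1)(X-a_2)-1\rangle$. Cusick's regulator--discriminant inequality then pins down the generators of $\calO^\times$ explicitly (\cref{pr:proposition_polynom-units-orders}), letting one compute both where the base shape lies (it sweeps an arc $B_0$ as $\log a_1/\log a_2$ varies, \cref{prop-bounded-shape}) and, after imposing congruence conditions on $a_1, a_2$ modulo powers of $2,3,5$, the exact sublattice $L_{c,d,r} \subset \Z^2$ cut out by $\calO_{c,d,r}^\times = (\Z + 2^c 3^d 5^r \calO)^\times$ (\cref{prop:suborder}, \cref{defin-Lcde}).

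The second place your sketch leaves a gap is the actual density mechanism. Even granting an explicit description of the sublattices, one still needs to show that the resulting family of shape points is dense. The paper does not invoke equidistribution of Frobenius or the density of a full Hecke orbit; instead it proves (\cref{theorm_density}) that the closure of $\{[\Lambda_{c,d,r}]\}$ is invariant under the diagonal flow and contains a horospherical arc, and then applies Margulis' banana trick. The diagonal invariance uses that $\log 3/\log 5$ is irrational to perturb $(d,r)$ while preserving the congruence condition, and the horospherical piece comes from explicit arithmetic of $5^r \bmod 7\cdot 2^c$. This is a genuinely different (and more hands-on) closing argument than ``full Hecke orbit is dense,'' and it is what makes the proof go through. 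Your proposal would need this—or a comparable rigidity input—to be completed.
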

Previously, Cusick \cite{cusickRegulatorSpectrumTotally1991} proved the closure of shapes contains an arc in the moduli space and David-Shapira \cite{davidShapesUnitLattices2017} showed some more curves that lie in the closure. Both Cusick and David-Shapira rely on constructing a specific family of cubic polynomials to make cubic fields, which is also a key step in our proof (c.f. \cref{eq:duke_poly}). What we further add is considering orders $\{\Z + n \calO\}_{n \in \Z}$ obtained from the monogenic order $\calO = \Z[\alpha]$ generated by a root $\alpha$ of this polynomial.

	With \cref{co:ds}, we can now convince ourselves that any relationship between systole ordering and volume ordering will not be straightforward.
 Also, \cref{co:ds} tells us that the possible shapes of unit groups of orders of totally real cubic fields can be arbitrarily close to any given shape. This is of independent interest in number theory and is a partial result towards the conjecture stated on a Mathoverflow discussion \cite{mathoverflow_disc}. %{\jialun It is an interesting discussion. I like their names, arithmetic ordering, geometric ordering and dynamical ordering. The author also mentioned a conjecture of Margulis about escape to infinity of dynamical ordering of shapes. }

One can also consider the periodic tori of the diagonal action in $\SL(3,\R)/\SL(3,\Z)$ %instead of $M\backslash \SL(3,\R)/\SL(3,\Z)$ 
and study their shapes. %This is only a small technicality and \cref{co:ds} also holds true when considering shapes of periodic tori without a quotient with respect to the finite group $M$ on the left. 
\begin{theorem}
    The shapes of periodic tori in $\SL(3,\R)/\SL(3,\Z)$ are dense in $\SL(2,\R)/\SL(2,\Z)$.
\end{theorem}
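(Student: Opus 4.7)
I would deduce this theorem from \cref{co:ds} by analyzing the natural 4-fold cover $\pi\colon\SL(3,\R)/\SL(3,\Z)\to M\backslash \SL(3,\R)/\SL(3,\Z)$. Because $A$ centralizes $M$, any periodic $A$-orbit $T_M$ in the base lifts to a finite union of periodic $A$-orbits in the total space; moreover, if $\tilde{x}$ is any lift and $m\in M$, the stabilizer of $m\tilde{x}$ in $A$ equals that of $\tilde{x}$ (using $m^{-1}am=a$ for $a\in A$), so all lifts share the same period lattice $\tilde{\Lambda}$. This $\tilde{\Lambda}$ is a sublattice of the base period lattice $\Lambda$ of index dividing $|M|=4$, and the shape $\tilde{s}(T_M)$ of any lifted torus is thus a well-defined function of $T_M$.

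Arithmetically, via the dictionary of Section~\ref{ss:orders}, $\Lambda$ is the logarithmic image of $\calO^\times/\{\pm1\}$ for a totally real cubic order $\calO$, while $\tilde{\Lambda}$ is the logarithmic image of the totally positive units $\calO^\times_+/\{\pm1\}$. Since squaring sends any unit to a totally positive one, $2\Lambda\subset\tilde{\Lambda}\subset\Lambda$, so $\tilde{s}$ is one of only finitely many sublattice shapes determined by $s$.

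To transfer density from $s$ to $\tilde{s}$, I would revisit the construction behind \cref{co:ds}, which uses the orders $\Z+n\calO$ with $\calO=\Z[\alpha]$ and $\alpha$ a root of the polynomial \cref{eq:duke_poly}, letting $n\to\infty$ and the defining polynomial vary. The plan is to redo the shape computation tracking the totally positive sublattice $(\Z+n\calO)^\times_+$ in place of the full unit group, and show that the resulting curves of lifted shapes are still dense in $\SL(2,\R)/\SL(2,\Z)$.

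The main obstacle is that the index $[(\Z+n\calO)^\times:(\Z+n\calO)^\times_+]$ and the precise isomorphism type of the corresponding sublattice can a priori jump with $n$. I would try to control this by restricting to an arithmetic progression of $n$'s on which the signature pattern of the fundamental units is fixed; this should make $\tilde{s}$ depend continuously on the parameters used in \cref{co:ds}, so that the density of base shapes translates directly into density of the lifted shapes.
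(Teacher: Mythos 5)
Your arithmetic translation is correct: periodic tori in $\SL(3,\R)/\SL(3,\Z)$ correspond to totally positive units, and $2\Lambda \subset \tilde\Lambda \subset \Lambda$. You also correctly identify the right strategy: re-run the same construction while tracking $\calO^{\times,+}$. But there is a genuine gap in how you propose to finish.

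The paper does not need to restrict to an arithmetic progression of $n$'s to fix a signature pattern; instead it exploits a precise coincidence in the $(2,3,5)$ case. Since $\alpha$ is totally positive while $\alpha-a_1$ has signature $(-,-,+)$, one has $\calO^{\times,+} = \langle \alpha, (\alpha-a_1)^2 \rangle$. The corresponding congruence lattice $L'_{c,d,r}$ is therefore obtained from $L_{c,d,r}$ by the substitution $n \mapsto 2n$, so $\left(\begin{smallmatrix} 1 & 0 \\ 0 & 2 \end{smallmatrix}\right) L'_{c,d,r} \subseteq L_{c,d,r}$. The decisive observation is then that the third congruence condition in \cref{eq:defi_of_L} carries the modulus $l_3 = 24$, which already forces $n$ to be even in $L_{c,d,r}$; hence the inclusion is an equality, and the shape points $[\psi(\calO_{c,d,r}^\times)]$ and $[\psi(\calO_{c,d,r}^{\times,+})]$ are literally the same. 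Once this equality is established, the whole density argument carries over verbatim.

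Your plan instead proposes to control the index $[(\Z+n\calO)^\times : (\Z+n\calO)^{\times,+}]$ by restricting $n$ to a congruence class where the signatures of fundamental units are stable, and to hope that $\tilde s$ then depends continuously on the parameters. This is not a routine continuity argument in the paper: density of $\{[\Lambda_{c,d,r}]\}$ is obtained from diagonal invariance plus a horospherical piece plus the banana trick, and restricting the triples $(c,d,r)$ to a sub-family could destroy either invariance property. Without the exact coincidence $[\psi(\calO_{c,d,r}^\times)]=[\psi(\calO_{c,d,r}^{\times,+})]$, you would need to re-verify both the diagonal invariance and the horospherical inclusion for the modified lattice $L'_{c,d,r}$, which your proposal does not address. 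In short, the missing step is the concrete computation showing that the 5-adic congruence already forces $n$ to be even, making the two families of shapes coincide and rendering any index-control argument unnecessary.
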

See \cref{ss:positive_units} for further details.

\begin{rem}
 
There is a closely related notion of a shape studied by Bhargava-Harron \cite{bhargava2016equidistribution}. 
The Bhargava-Harron notion studies the shape of the lattice coming from the additive group of an order in a number field whereas our work studies the lattices coming from the multiplicative group of units in the order. 
Both of these notions of shapes associate a point in $\SL(n-1,\R)/\SL(n-1,\Z)$ to an order in a degree $n$ totally real number field.  

Bhargava-Harron proved equidistribution of shapes of maximal orders of number fields when ordered with respect to discriminant for degree $n=3,4,5$. The tools to prove such equidistribution statements for our notion of shapes are currently not known but we expect similar equidistribution results to also hold in some natural orderings for totally real cubic fields, as conjectured in \cite{mathoverflow_disc}.
\end{rem}
\iffalse	
	\begin{rem}
		There is a third ordering of periodic tori, that was not mentioned above but is of considerable recent interest. 
		This can be called the ``arithmetic ordering''. Since each periodic torus arises from a totally real cubic order, one can 
		start arranging the periodic tori in ``packets'' of tori corresponding to the same cubic order.
		Then, \cite{einsiedler_distribution_2011} sorted the packets in growing order their discriminant 
		and proved the equidistribution of periodic tori on $\SL(3,\R)/\SL (3,\Z)$. %See \cref{re:ordre positive} for the reason why we module the sign group $M$ on the left.
	\end{rem}
	\fi
	
	\paragraph{Sketch of the proof} 

	By a classical observation (cf.  \cite{einsiedler_distribution_2009}), the periods of a periodic torus are in correspondence with the units of a totally real cubic order.
	The proof of the main theorem is based on a special family of cubic orders already studied by Cassels \cite{casselsProductInhomogeneousLinear1952} and Ankeny--Brauer--Chowla \cite{ankenyNoteClassNumbersAlgebraic1956}. 
Using the ``best lower bound" of Cusick \cite{cusickLower1983} between the regulator and the discriminant in the cubic case, we compute the generators of the unit group of these orders in \cref{pr:proposition_polynom-units-orders}.
	 As already observed by Cusick \cite{cusickRegulatorSpectrumTotally1991} and David-Shapira \cite{davidShapesUnitLattices2017}, this provides a family of shapes whose projection on $\mathbb{H}/\SL(2,\Z)$ are concentrated on the geodesic line between the square torus and the hexagonal torus in the moduli space of flat tori $\SL(2,\R)/\SL(2,\Z)$. (See \cref{fig:sub1}) The proof is given in section \ref{ss:cubic-field} for completeness. 
	 
	 \begin{figure}
\centering
\begin{subfigure}{.45\textwidth}
  \centering
  \includegraphics[width=.6\linewidth]{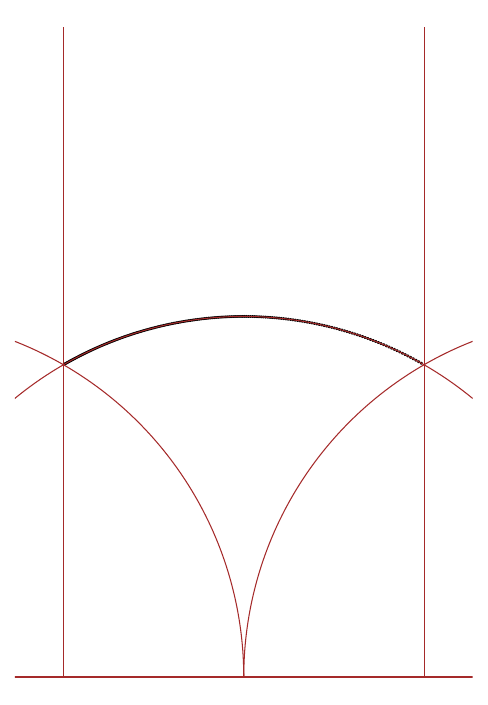}
  \caption{Shape points from \cref{le:where_is_shape} with\\ $a_1\in\{5^j,1\leq j\leq 199 \}$ and
  $a_2=5^{200}$. %\footnote{This figure shows that the shape points given by these polynomials can approximate the whole arc \cref{prop-bounded-shape}} 
  }
  \label{fig:sub1}
\end{subfigure}%
\begin{subfigure}{.45\textwidth}
  \centering
  \includegraphics[width=.6\linewidth]{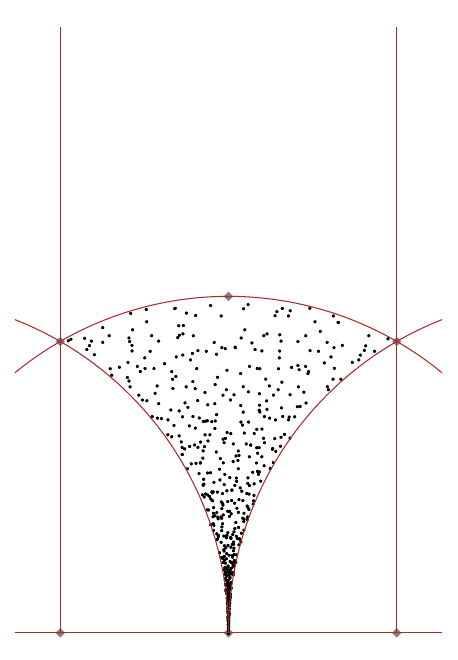}
  \caption{Shape points from \cref{defin-Lcde} $(p_1,p_2,p_3)=(2,3,5)$ and $c,d,r \in [1,10]$.}
  \label{fig:sub2}
\end{subfigure}
\caption{Figures of shape points in a fundamental domain of $\mathbb{H}/\SL(2,\Z)$.}

%\caption[Caption for LOF]{Real caption\protect\footnotemark}
\label{fig:test}
\end{figure}
%\footnotetext{blah blah blah}
%{\jialun{Maybe we can explain what are the points later and in more details}}

	 In order to obtain more shapes, we consider finite index suborders of the above cubic orders. 
Namely, for three distinct primes $p_1, p_2, p_3$ and any cubic order from the above family $\calO$, we prove in \cref{prop:suborder} that the units of the suborder $\Z+p_1^c p_2^d p_3^r\calO$ may be computed using generators of $\calO^\times$ and three congruence equations satisfied by the powers of these generators, our so-called $3 \times 2$-matrix congruence equation.

In section \ref{ss:density}, we compute explicitly the matrix congruence equation for $(2,3,5)$ in equation \eqref{eq:defi_of_L}. 
Then in Proposition \ref{theorm_density}, we prove that for the square $2$-lattice $\Z^2$, extracting sublattices using such equations will approximate all shapes in $\SL(2,\mathbb{R})/\SL(2,\mathbb{Z})$. 
Namely, we prove that the closure of this collection of shapes of extracted $2$-lattices, is invariant by the diagonal group and contains a horospherical piece.
Then, by the ``Banana trick" argument \cite{kleinbock1996bounded}, it implies the density of shapes.

To sum up, the family of orders that allows us to prove the density are $\Z+2^c 3^d 5^r\calO$, where $c,d,r \in \N$ and $\calO = \Z[X]/\langle f(X) \rangle$ for some explicit polynomial $f$ defined in \cref{eq:duke_poly}.

\begin{rem}
\begin{enumerate}
\noindent
    \item For the higher dimensional case of $\SL(n,\R)/\SL(n,\Z)$ with $n\geq 4$, there exists similar inequalities as Cusick's inequality in cubic case such as \cite{remakUberGrossenbeziehungenZwischen1952} and \cite{silvermanInequalityRelatingRegulator1984}. But the bound is not very good. With the same method, we can only locate the lattice $\calO^\times$ up to finite index. Therefore, our method cannot be directly generalized to higher dimension, although we can obtain some unboundedness result of shapes. 
    
    \item 
	It is interesting that we can add non-Archimedian local conditions to obtain a collection of shapes that admit diagonal and horospherical invariance, which is the key to the density. 
	We believe that these techniques indicate a deeper arithmetic structure of shapes of orders which might shed more light on the distribution of shapes of units of orders in the future.
    
\end{enumerate}

\end{rem}

	\section*{Acknowledgements}
	
	We would like to thank 
	Nalini Anantharaman for a research visit with helpful discussions at the 
	Institut de Recherche Mathématique Avancée (Strasbourg), Samuel Lelièvre, Vincent Tugayé for help with numerical simulations and Menny Aka, Yves Benoist, Fanny Kassel for helpful conversations during this project. {We also wish to thank the secretarial staff of the Centre de Math\'ematiques Laurent Schwartz and Laboratoire de Math\'matiques d'Orsay for their outstanding assistance in organizing the mini-workshop ``Diagonal actions in the space of lattices" that allowed the joint work to begin.} %{\nihar Better to write non-abbreviated CMLS and LMO}
	
	While working on this project, N.G. received funding support from the ERC Grant\footnote{
Funded by the European Union. Views and opinions expressed are however those of the author(s) only and do not necessarily reflect those of the European Union or the European Research Council Executive Agency. Neither the European Union nor the granting authority can be held responsible for them.
}
	titled ``Integrating Spectral and Geometric data on Moduli Space''. J.L. would like to thank the support of the starting grant of CNRS. %{\nihar Jialun and Thi can also write about their funding here}
	
	\section{Background}
	
	\subsection{Number theoretic preliminaries}
	\label{ss:nt_prelims}
	
	Let $(K,\sigma)$ be a totally real cubic field with ordered real embeddings, where $K$ denotes the cubic field and  $\sigma:=(\sigma_i)_{1\leq i\leq 3}$ is a fixed ordering of the three real embeddings $\sigma_1,\ \sigma_2,\ \sigma_3$ of $K$.
	Let $\calO$ be an order in $K$, which is a finite index subring of the ring of integers $\calO_K$. 
	The map $\sigma$ gives an embedding of $\calO$ into $\R^3$ as a rank three free abelian group, a $3-$lattice. 
	The covolume of $\sigma(\calO)$ in $\R^3$ is the \emph{discriminant} of the order $\calO$ denoted by $\disc(\mathcal{O})$.
	
	\begin{example}
		\label{re:example_of_order}
		%{\thi Do we want the polynomial to have only real roots?}
		Fix a monic irreducible polynomial $f(X) \in \mathbb{Z}[X]$ of degree 3 with three real roots. Then $K = \mathbb{Q}[X]/\langle f(X) \rangle$ is a cubic field in which $\calO = \mathbb{Z}[X]/\langle f(X) \rangle$ is an order. The discriminant of this order is given by 
		\begin{equation}
			\disc(\calO) = \prod_{i<j\in\{1,2,3\} } (\alpha_i - \alpha_j)^{2},
		\end{equation}
		where $\alpha_1,\alpha_2,\alpha_3 \in \overline{\mathbb{Q}}$ are the three roots of $f$.
		
	\end{example}

	Let $\calO^\times$ be the subset of invertible elements in $\calO$, which is a multiplicative group. We write an important theorem concerning the structure of $\calO^\times$.
	\begin{theorem}
		\label{th:dut}
		{\textbf (Dirichlet's unit theorem)}
		
		Given an order $\calO \subseteq \calO_K$ for a totally real cubic field $K$, the group $\calO^\times$ is isomorphic to $\Z^2\times \{\pm 1\}$.  That is, there exists two generators $\alpha,\beta \in \calO^{\times}$ such that 
		\begin{equation}
			\calO^{\times} = \langle \alpha,\beta , \pm 1 \rangle. 
		\end{equation}
	\end{theorem}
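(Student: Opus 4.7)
The statement is the classical Dirichlet unit theorem specialised to a totally real cubic order, so the plan is to adapt the standard logarithmic-embedding proof and then pass from $\calO_K$ to $\calO$ by a finite-index argument.

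The first step is to define the logarithmic embedding
\[
L:\calO^\times \longrightarrow \R^3, \qquad u \longmapsto \bigl(\log|\sigma_1(u)|,\log|\sigma_2(u)|,\log|\sigma_3(u)|\bigr),
\]
which is a group homomorphism from $(\calO^\times,\cdot)$ into the additive group $\R^3$. Since every $u\in\calO^\times$ has norm $\sigma_1(u)\sigma_2(u)\sigma_3(u)=\pm 1$, the image $L(\calO^\times)$ lies in the trace-zero hyperplane $H=\{x_1+x_2+x_3=0\}\cong \R^2$. The kernel consists of units all of whose real conjugates have absolute value $1$; such an algebraic integer has all Galois conjugates on the unit circle, hence is a root of unity by Kronecker's theorem, and as $K$ is totally real this forces $\ker L=\{\pm 1\}$.

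The second step is to show that $L(\calO^\times)$ is a discrete subgroup of $H$. This is standard: for any $R>0$ the preimage in $\calO$ of the box $\{|x_i|\le R\}$ consists of algebraic integers whose conjugates are uniformly bounded, and there are only finitely many such elements because the coefficients of their minimal polynomials (elementary symmetric functions of the conjugates) are bounded integers. Hence $L(\calO^\times)$ is a discrete subgroup of the $2$-dimensional space $H$, so it is a free abelian group of rank at most $2$.

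The third (and main) step is to prove the rank is exactly $2$. The classical argument applies Minkowski's lattice point theorem to $\sigma(\calO)\subset\R^3$: scaling the coordinates by $(t_1,t_2,t_3)$ with $t_1t_2t_3=1$ bounded by a suitable constant depending on $|\disc(\calO)|$, one produces nonzero $\alpha\in\calO$ of bounded norm with prescribed sizes of the conjugates. Iterating this construction in two independent directions of $H$ and using the finiteness of ideals of bounded norm yields two units whose images under $L$ are linearly independent. Combined with the rank-$\le 2$ bound from discreteness, this gives $L(\calO^\times)\cong \Z^2$, and putting this together with $\ker L=\{\pm 1\}$ gives the theorem.

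An alternative shortcut, which I would pursue first, is to invoke the classical Dirichlet theorem for the maximal order $\calO_K$ and deduce the statement for $\calO$ as follows. Since $[\calO_K:\calO]<\infty$, the subgroup $\calO^\times$ has finite index in $\calO_K^\times$: indeed any $u\in\calO_K^\times$ satisfies $u^N\equiv 1\pmod{[\calO_K:\calO]\calO_K}$ for some bounded $N$, and such congruence classes already land in $\calO^\times$. A finite-index subgroup of $\Z^2\times\{\pm 1\}$ is again of the form $\Z^2\times T$ with $T\subseteq\{\pm 1\}$, and the torsion $T$ equals $\{\pm 1\}$ because $-1\in\calO^\times$ always. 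This route sidesteps reproving the Minkowski step and is cleanest; the main obstacle in either approach is the geometry-of-numbers construction of a unit in a second independent logarithmic direction, which is exactly where the totally real hypothesis and the non-degeneracy of the trace-zero hyperplane are used.
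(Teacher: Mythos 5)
The paper does not prove this theorem: it is stated as classical background (Dirichlet's unit theorem) and used without proof, so there is no paper argument to compare against. Your sketch is a correct outline of the standard proof (logarithmic embedding, kernel equals $\{\pm1\}$ by Kronecker since $K$ is totally real, discreteness via bounded conjugates, rank exactly two via Minkowski), and your finite-index shortcut is also sound: from $f=[\calO_K:\calO]$ one has $f\calO_K\subseteq\calO$, the quotient $(\calO_K/f\calO_K)^\times$ is finite, so $u^N\equiv 1\pmod{f\calO_K}$ forces $u^N,u^{-N}\in\calO$ and hence $u^N\in\calO^\times$; thus $\calO^\times$ has finite index in $\calO_K^\times\cong\Z^2\times\{\pm1\}$, contains $-1$, and therefore is itself $\cong\Z^2\times\{\pm1\}$.
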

	For any $x\in\calO^\times$, we define the map $\psi$ by
	\begin{equation}
		\psi(x)=
		\begin{pmatrix}
			\log|\sigma_1(x)| &  & (0) \\
			& \log|\sigma_2(x)| & \\
			(0) & & \log|\sigma_3(x)|
		\end{pmatrix}.
		\label{eq:defi_of_psi}
	\end{equation}
	
	Due to $x\in\calO^\times$, we know that the product $\sigma_1(x)\sigma_2(x)\sigma_3(x)=\pm 1$, hence the image of $\psi$ is inside the hyperplane $\frak a=\{\diag(x_1,x_2,x_3),\ x_i\in\R,\ x_1+x_2+x_3=0 \}$. Since $\sigma(\calO^\times)$ is discrete, the image $\psi(\calO^\times)$ is also discrete and hence a $2$-lattice in $\frak a$. We fix one identification of $\frak a$ with $\R^2$ from now on. We denote the image of $\psi(\calO^\times)$ by $\Lambda(\calO)$, a $2$-lattice in $\R^2$. The covolume of the lattice $\psi(\calO^\times)$ in $\frak a$ is $\sqrt{3}$ times the \emph{regulator} $R(\calO)$ of the order $\calO$. 
	
	\begin{defin}[Definition of shapes]
	\label{de:shape}
		For any $2$-lattice $\Lambda$ in $\R^2$, up to rescaling, it becomes covolume one. 
		We denote by $[\Lambda]$ the coset $g\, \SL(2,\mathbb{Z})$ in the space of covolume one $2$-lattices $\SL(2,\R)/\SL(2,\Z)$ such that $g\, \mathbb{Z}^{2}=  c \cdot \Lambda$ for some $c > 0$. 
		
		The \emph{shape} of $\psi(\calO^\times)$ is defined to be $[\Lambda(\calO)]$.\footnote{The shape point $[\Lambda(\calO)]$ depends on the choice of the ordered real embeddings $\sigma$. Since $\sigma$ is fixed at the beginning, we omit it in $[\Lambda(\calO)]$.} 
	\end{defin}
	
	We state a result of Cusick which relates the regulator and discriminant of orders in cubic fields.\footnote{The result is stated for maximal orders, but the proof works verbatim for general orders. See also \cite[Theorem 5.8]{conradUnit}.} We will be using this theorem in Proposition \ref{pr:proposition_polynom-units-orders}.
	\begin{theorem}
		\label{th:cusick}
		{\textbf (Cusick, \cite{cusickLower1983})}
		
		For any order $\mathcal{O}$ in a totally real cubic number field $K$, one has the inequality given by 
		\begin{equation}
			R(\mathcal{O}) \geq  \frac{1}{16}  \log^2 (\disc  (\mathcal{O}) /4).
		\end{equation}
	\end{theorem}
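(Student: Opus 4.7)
The plan is to combine Minkowski's theorem on the two-dimensional logarithmic unit lattice with a Mahler-style bound on the discriminant of the subring generated by a shortest unit. Specifically, I would first choose $\epsilon\in\mathcal{O}^\times$ so that $\psi(\epsilon)$ realizes the first successive minimum of $\Lambda(\mathcal{O})\subset\mathfrak{a}$. Since a totally real cubic field $K$ has no proper intermediate field over $\mathbb{Q}$, any unit $\epsilon\neq\pm 1$ satisfies $\mathbb{Q}(\epsilon)=K$, so $\mathbb{Z}[\epsilon]\subseteq\mathcal{O}$ is a subring of full rank and hence
\[
\disc(\mathcal{O}) \;\leq\; \disc(\mathbb{Z}[\epsilon]) \;=\; \prod_{1\leq i<j\leq 3}\bigl(\sigma_i(\epsilon)-\sigma_j(\epsilon)\bigr)^{2}.
\]

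Next I would extract a lower bound on $M:=\max_i|\sigma_i(\epsilon)|$. The trivial estimate $|\sigma_i(\epsilon)-\sigma_j(\epsilon)|\leq 2M$ gives $\disc(\mathcal{O})\leq 64\,M^{6}$, but this cannot be tight: the unit constraint $\sigma_1(\epsilon)\sigma_2(\epsilon)\sigma_3(\epsilon)=\pm 1$ forbids all three pairwise differences from simultaneously reaching $2M$. Up to relabeling, parametrize $\sigma_1(\epsilon)=M$, $\sigma_2(\epsilon)=aM$, $\sigma_3(\epsilon)=\pm 1/(aM^{2})$ with $|a|\leq 1$; then to leading order the product of squared differences equals $M^{6}\cdot a^{2}(1-a)^{2}$, and $\max_{|a|\leq 1}a^{2}(1-a)^{2}=4$, attained as $\sigma_2(\epsilon)\to -\sigma_1(\epsilon)$. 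A careful accounting of the lower-order terms should therefore give an inequality of the form $M^{6}\geq \disc(\mathcal{O})/4$, i.e.
\[
\log M \;\geq\; \tfrac{1}{6}\log\bigl(\disc(\mathcal{O})/4\bigr).
\]

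To convert this into a lower bound on the regulator, I would invoke Minkowski's theorem for two-dimensional lattices. On the one hand, $|\psi(\epsilon)|^{2}=\sum_i\log^{2}|\sigma_i(\epsilon)|\geq\log^{2}M$. On the other hand, Hermite's constant $\gamma_2=2/\sqrt{3}$ applied to $\Lambda(\mathcal{O})$, whose covolume is $\sqrt{3}\,R(\mathcal{O})$ by the paper's normalization, yields $|\psi(\epsilon)|^{2}\leq 2R(\mathcal{O})$ for the shortest vector. Combining these two inequalities gives $R(\mathcal{O})\geq \tfrac{1}{72}\log^{2}(\disc(\mathcal{O})/4)$, which is already an inequality of the stated shape.

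The main obstacle is upgrading the constant from $1/72$ to the sharp value $1/16$ claimed by Cusick. Closing this roughly factor-$4.5$ gap requires tightening some combination of: (i) the bound $|\psi(\epsilon)|^{2}\geq\log^{2}M$, by exploiting the hyperplane constraint $\sum_i\log|\sigma_i(\epsilon)|=0$ so as to control all three coordinates rather than only the largest; (ii) the Minkowski step, by passing to Minkowski's second theorem applied to the pair of fundamental units and thereby estimating the product $\lambda_1\lambda_2$ rather than $\lambda_1$ alone; and (iii) the discriminant step, by bookkeeping the $O(M^{4})$ corrections and using integrality of the coefficients of the minimal polynomial of $\epsilon$. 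I would expect Cusick's proof to combine (i) and (ii), with the technical heart being a careful optimization over the admissible shapes of the triple $(\sigma_1(\epsilon),\sigma_2(\epsilon),\sigma_3(\epsilon))$ when $\epsilon$ is the shortest unit.
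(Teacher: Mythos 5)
The paper does not prove this statement; it is quoted from Cusick's paper \cite{cusickLower1983}, with a footnote observing that Cusick's argument (stated there for maximal orders) carries over verbatim to arbitrary orders. So there is no internal proof to compare against. Your framework --- take a shortest unit $\epsilon$, use the full-rank inclusion $\mathbb{Z}[\epsilon]\subseteq\mathcal{O}$ to bound $\disc(\mathcal{O})\leq\disc(\mathbb{Z}[\epsilon])$, then apply Hermite's inequality to the two-dimensional unit lattice --- is indeed the shape of Cusick's argument, and the constant $1/72$ you reach is a correct, if weaker, statement. But the gap to $1/16$ that you flag is genuine, and neither of the two refinements you sketch closes it on its own: the trace-zero constraint strengthens $|\psi(\epsilon)|^2\geq\log^2 M$ only to $|\psi(\epsilon)|^2\geq\tfrac{3}{2}\log^2 M$, which takes $1/72$ to $1/48$ and no further.

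The genuinely lossy step is your discriminant estimate, which must be phrased in terms of all three logarithms rather than the single parameter $M$. Sort $u\geq v\geq w$ with $u+v+w=0$ for the values $\log|\sigma_i(\epsilon)|$; of the three pairwise differences $|\sigma_i(\epsilon)-\sigma_j(\epsilon)|$, two are controlled by $2e^{u}$ but the third only by $2e^{v}$, giving (absorbing absolute constants into the $\disc/4$) $\log(\disc(\mathcal{O})/4)\lesssim 4u+2v$ rather than $6u=6\log M$. The elementary identity $u^2+v^2+(u+v)^2-\tfrac{1}{2}(2u+v)^2=\tfrac{3}{2}v^2\geq 0$ then gives $|\psi(\epsilon)|^2\geq\tfrac{1}{8}\log^2(\disc/4)$, and Hermite in the form $R\geq\tfrac{1}{2}\lambda_1^2$ yields exactly $1/16$. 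Your bound $\disc\lesssim 4M^6$ arises from maximizing $a^2(1-a)^2$ over the ratio $a=\sigma_2/\sigma_1$; but the maximizer $a=-1$ corresponds to $v=u$, precisely where $4u+2v$ degenerates to $6u$, so the optimization discards the information carried by $v$. The true extremal configuration has $v=0$ (one conjugate of absolute value one), which is exactly what the paper's family $\calO^{(a_1,a_2)}$ realizes and where the bound $R\geq\tfrac{1}{16}\log^2(\disc/4)$ is asymptotically attained.
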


	\subsection{Units of orders and periods of periodic tori}
	\label{ss:orders}
Using the ring structure of the order $\calO$, we can realize $\calO^\times$ as a commutative subgroup of $\SL_3(\Z)$. 
	Similarly as above, the map $\Psi$ is defined for every $x\in \calO$ by
	$$\Psi (x) = \begin{pmatrix}
		\sigma_1(x) &  & (0) \\
		& \sigma_2(x) & \\
		(0) & & \sigma_3(x)
	\end{pmatrix}
	. $$
	Note that $\psi = \log \vert \Psi \vert$. 
	The action of $\calO^\times$ on $\sigma(\calO)$ is given by $\Psi(x)\sigma(y)=\sigma(xy)$. 
	This action is linear and preserves the $3$-lattice $\sigma(\calO)$. It may not necessarily be orientation preserving.

For every oriented basis $y = (y_1,y_2,y_3)$ of the $\Z$-lattice $\calO$, we set
\begin{equation}\label{eq-def-h_y}
h_y= \frac{1}{\rm{disc} (\calO)^{1/3} } \big(\sigma(y_1), \sigma(y_2),\sigma(y_3)\big)
\end{equation}
Note that $\det\big(\sigma(y_1), \sigma(y_2),\sigma(y_3) \big)=\rm{disc}(\calO)$. As a consequence, $h_y \in \SL_3(\R)$.
We construct a map $\gamma$ from $\calO^\times$ to $\GL_3(\Z)$ as follows. 
For all $x\in \calO^\times$, we set
	\[ \gamma(x)=  h_y^{-1} \Psi(x) h_y. \]
Different choice of oriented basis only changes $\gamma$ up to conjugation in $\GL_3(\Z) $. 
	
	Let $A$ be the diagonal group $\{\diag(a_1,a_2,a_3):\ a_i>0 \;\; \forall i ,\ a_1a_2a_3=1\}$ and $M$ be the sign group $\{\diag(\varepsilon_1,\varepsilon_2,\varepsilon_3):\ \varepsilon_i=\pm 1 \;\; \forall i, \; \varepsilon_1\varepsilon_2\varepsilon_3=1\}$.
	\begin{lem}\label{lem:order-gamma}
		For all orders $\calO$ and every choice of basis $y=(y_1,y_2,y_3) \in \calO^3$ and for the above notations,
		$$h_y^{-1}(MA)h_y\cap \SL_3(\Z)=\gamma(\calO^\times)\cap \SL(3,\R).$$
	\end{lem}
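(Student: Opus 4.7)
The plan is to interpret conjugation by $h_y$ as a change of basis. Since the paper already notes that $h_y\in\SL_3(\R)$ and that $h_y$ sends the standard basis of $\R^3$ to $\disc(\calO)^{-1/3}(\sigma(y_1),\sigma(y_2),\sigma(y_3))$ -- a positive rescaling of a $\Z$-basis of $\sigma(\calO)$ -- one has, for any $g\in\SL_3(\R)$, $g\Z^3=\Z^3$ iff $h_ygh_y^{-1}$ preserves $\sigma(\calO)$. The lemma then reduces to identifying, among diagonal $\SL_3(\R)$-matrices that preserve $\sigma(\calO)$, exactly those of the form $\Psi(x)$ for a unit $x$ of norm one.

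For the inclusion $\gamma(\calO^\times)\cap\SL_3(\R)\subseteq h_y^{-1}(MA)h_y\cap\SL_3(\Z)$, I would check directly from the definition that $\gamma(x)$ is the matrix of multiplication by $x$ on $\calO$ in the basis $y$, so $\gamma(x)\in\GL_3(\Z)$ whenever $x\in\calO^\times$. Its determinant is $\det\Psi(x)=\sigma_1(x)\sigma_2(x)\sigma_3(x)=N_{K/\Q}(x)$, so requiring $\gamma(x)\in\SL_3(\R)$ forces $N(x)=1$. Then $\Psi(x)$ is diagonal of determinant one and therefore lies in $MA$, putting $\gamma(x)=h_y^{-1}\Psi(x)h_y$ in the right-hand side.

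For the reverse inclusion, I would take $g\in h_y^{-1}(MA)h_y\cap\SL_3(\Z)$ and set $D:=h_ygh_y^{-1}=\diag(d_1,d_2,d_3)$ with $d_1d_2d_3=1$. The first paragraph gives that $D$ preserves $\sigma(\calO)$; evaluating $D$ at the distinguished lattice vector $\sigma(1)=(1,1,1)$ yields $(d_1,d_2,d_3)\in\sigma(\calO)$, so there exists $x\in\calO$ with $\sigma_i(x)=d_i$ for every $i$. Hence $D=\Psi(x)$ and $N(x)=d_1d_2d_3=1$, so $x\in\calO^\times$ and $g=\gamma(x)$.

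I do not anticipate a real obstacle; the only subtlety is in the reverse inclusion, where it is crucial that $\sigma(1)$ is itself a lattice point of $\sigma(\calO)$. This is exactly what lets a diagonal matrix preserving $\sigma(\calO)$ be promoted to an element of $\Psi(\calO)$, after which the determinant-one condition upgrades it to a unit of $\calO$.
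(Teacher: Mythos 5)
Your proof is correct and follows the same overall strategy as the paper: conjugation by $h_y$ turns the problem into one about diagonal matrices preserving the lattice $\sigma(\calO)$, and the key step in the reverse inclusion is evaluating the diagonal matrix $D$ at the lattice point $\sigma(1)$ to produce an $x\in\calO$ with $\Psi(x)=D$. The one place you diverge is the final upgrade from $x\in\calO$ to $x\in\calO^\times$: the paper reruns the whole argument for $\beta^{-1}\in\SL_3(\Z)\cap h_y^{-1}(MA)h_y$ to exhibit $x^{-1}\in\calO$, whereas you invoke the determinant condition $N_{K/\Q}(x)=d_1d_2d_3=1$ and the standard fact that an element of an order with unit norm is a unit (multiplication by $x$ has determinant $\pm1$ on the $\Z$-module $\calO$, hence is an automorphism, so $1$ has a preimage). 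Both finishes are valid; yours is slightly more economical, while the paper's avoids appealing to the norm-unit criterion and stays entirely self-contained. One minor remark: you state $N(x)=1\Rightarrow x\in\calO^\times$ without justification—this is a well-known lemma, but in a fully written-up version you would want to cite or prove it, since it is not automatic for non-maximal orders without the multiplication-operator argument.
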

	\begin{proof}
		From the construction, we know that $\gamma(\calO^\times)\cap\SL_3(\R)$ is a subset of $h_y^{-1}(MA)h_y\cap \SL_3(\Z) $.
		
		For the other direction, notice that $\sigma(\calO)=h_y\Z^3$.
		Let $\beta$ be any element in $\SL_3(\Z)\cap h_y^{-1}(MA)h_y$.
		Since $\SL(3,\Z)$ preserves the $3$-lattice $\Z^3$, on one hand,
		the matrix $h_y\beta h_y^{-1}$ preserves the $3$-lattice $\sigma(\calO)$.
		On the other hand, it is in $AM$, therefore diagonal.
		We denote its diagonal coefficients by $(\ell_i)_{i=1}^3$ and abusing notations $\ell = h_y\beta h_y^{-1}$.
		Since $\calO$ is a subring, it contains $\pm 1$. 
		In particular $\ell \sigma(1)\in\sigma(\calO)$. 
		Hence there exists an element $x\in\calO$ such that $\ell_i =\sigma_i(x)$ for all $1 \leq i \leq 3$. 
		Consequently $h_y\beta h_y^{-1}=\Psi(x)$. 
		
		The same argument also works for $\beta^{-1}$. Therefore $h_y\beta h_y^{-1}\in \Psi(\calO^\times) $, which implies $\beta\in \gamma(\calO^\times)$.
	\end{proof}	
	Consider the diagonal orbit $F=MAg\SL_3(\Z)$ on $M\backslash\SL_3(\R)/\SL_3(\Z)$ with $g\in\SL_3(\R)$. Let $\frak a=\{\diag(a_1,a_2,a_3):\ a_i\in\R,\ a_1+a_2+a_3=0 \}$ which is the Lie algebra of $A$. The set of periods of $F$ is defined by
	\[\mathrm{P}(F)=\{v\in \frak a:\ \exp(v)z=z \; , \; \forall z \in F \}.  \]
	A diagonal orbit $F$ is called a periodic torus if $\mathrm{P}(F)$ has finite covolume in $\frak a$.
	
	\begin{lem}
		The orbit $ F_\calO=MAh_y\SL_3(\Z)$ is periodic and its set of periods is given by $\psi(\calO^\times)$. 
		As a consequence, the shape of the periods of $F_\calO$ is exactly $[\Lambda(\calO)]$.
	\end{lem}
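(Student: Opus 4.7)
The plan is to compute the stabilizer of an arbitrary point of $F_\calO$ under the $A$-action, use \cref{lem:order-gamma} to identify it with a subgroup of $\calO^\times$, and then read off the periods via $\psi$. For $v \in \frak a$ and a point $z = M a_0 h_y \SL(3,\Z) \in F_\calO$ with $a_0 \in A$, the condition $\exp(v) z = z$ reads $\exp(v) a_0 h_y \in M a_0 h_y \SL(3,\Z)$; since $A$ is abelian and commutes with $M$, the factor $a_0$ cancels and the condition reduces (independently of the chosen point) to
\[ \exp(v) \in M h_y \SL(3,\Z) h_y^{-1}, \]
i.e., to the existence of $m \in M$ and $\delta \in \SL(3,\Z)$ with $\delta = h_y^{-1} m \exp(v) h_y$. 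Such a $\delta$ sits in $h_y^{-1}(MA) h_y \cap \SL(3,\Z)$, which by \cref{lem:order-gamma} equals $\gamma(\calO^\times) \cap \SL(3,\R)$.

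Unwinding this identification, $\delta = h_y^{-1}\Psi(x) h_y$ for some $x \in \calO^\times$ with $\det\Psi(x) = \sigma_1(x)\sigma_2(x)\sigma_3(x) = 1$, so $m\exp(v) = \Psi(x)$. Writing $\Psi(x) = m_x \exp(\psi(x))$ where $m_x = \diag(\mathrm{sign}(\sigma_i(x))) \in M$, and using $M \cap A = \{I\}$, one obtains $m = m_x$ and $v = \psi(x)$; conversely, every $x \in \calO^\times$ with $\sigma_1(x)\sigma_2(x)\sigma_3(x) = 1$ yields a period $v = \psi(x)$ by reversing the argument. Hence
\[ \mathrm{P}(F_\calO) = \{\psi(x) : x \in \calO^\times,\ \sigma_1(x)\sigma_2(x)\sigma_3(x) = 1\}, \]
and this set equals $\psi(\calO^\times) = \Lambda(\calO)$, since $\psi(-x) = \psi(x)$ while $x \mapsto -x$ flips the sign of the norm, allowing any unit to be adjusted to have norm $+1$ without changing its image under $\psi$.

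By Dirichlet's unit theorem (\cref{th:dut}), $\calO^\times/\{\pm 1\}$ is free of rank $2$, so $\Lambda(\calO)$ is a full rank-two lattice in $\frak a$; therefore $F_\calO$ is periodic and, by \cref{de:shape}, has shape $[\Lambda(\calO)]$. The main subtlety is bookkeeping the role of $M$ on both sides simultaneously: the signs of the $\sigma_i(x)$ have to be absorbed into $m_x$ in order to extract $\exp(\psi(x))$ cleanly from $\Psi(x)$, and on the arithmetic side the restriction to $\SL(3,\R)$ in \cref{lem:order-gamma} corresponds precisely to selecting units of norm $+1$, which is exactly why the sign-flip identity $\psi(-x) = \psi(x)$ suffices to match the norm-one units with all of $\calO^\times$ at the level of $\psi$-images.
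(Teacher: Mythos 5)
Your proof is correct and follows essentially the same route as the paper: reduce the period condition to membership in $M h_y \SL(3,\Z) h_y^{-1}$, invoke \cref{lem:order-gamma} to match this with norm-one units, and use $\psi(-x)=\psi(x)$ to pick up the norm $-1$ units. You spell out the sign bookkeeping ($\Psi(x)=m_x\exp(\psi(x))$, $M\cap A=\{I\}$) that the paper leaves implicit, and you add the appeal to Dirichlet's unit theorem to justify periodicity, but the argument is the same one.
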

	\begin{proof}
		The orbit $F_\calO$ does not depend on the choice of $h_y$, since different choices of $y$ are same up to right multiplication of $\SL(3,\Z)$.
		A period $v$ of $F_\calO$ is given by $m\exp(v)h_y=h_y\gamma$ for some $m\in M$ and $\gamma\in\SL(3,\Z)$. Due to \cref{lem:order-gamma}, we obtain $v\in \psi(\calO^\times)$. 
		
		For each $x\in\calO^\times$ with norm $1$, we can also find $v$ again due to \cref{lem:order-gamma}.	For each element $x$ in $\calO^\times$ with norm $-1$, the element $-x$ has norm $1$ and $\psi(x)=\psi(-x)$.
	\end{proof}
\begin{rem}\label{re:order positive}
\begin{enumerate}
\noindent
    \item Most suborders of a given order will give, up to homothety, a different oriented basis for the $3$-lattice.
    As a consequence, they will correspond to another periodic torus, i.e. an abelian subgroup of $\SL(3,\Z)$ of dimension $2$.
	
	\item The above construction starts from a totally real cubic order and yields a periodic torus. 
	The correspondence may be generalised to orders of totally real polynomials of higher degree 
	(Cf. \cite{einsiedler_distribution_2009}).

	\item Two different periodic tori can have the same set of periods, hence the same shape.
	More precisely, there are examples of totally real cubic order $\calO$, generators $y'=(y_1',y_2',y_3')$ of a $3$-sublattice of $\calO$ (not homothetic to $\calO$) such that $h_{y'}^{-1} M \Psi(\calO^\times)h_{y'} \subset \SL(3,\Z)$ and $h_{y}^{-1} M \Psi(\calO^\times)h_{y}$ are different periodic tori with the same set of periods.
		
	\item 
	\label{reit:positive_units}
	We consider $M\backslash \SL(3,\R)/\SL (3,\Z)$ instead of $\SL(3,\R)/\SL (3,\Z)$ for the following reason: the periods of periodic tori in $\SL(3,\R)/\SL (3,\Z)$ corresponds to totally positive units, that is $\calO^{\times,+}=\{x\in\calO^\times:\ \sigma_j(x)>0 \text{ for }j=1,2,3 \}$. From a number-theoretic perspective, the shape of $\calO^\times$ is more natural, which aligns with the setting of $M\backslash \SL(3,\R)/\SL (3,\Z) $.
	
	{For completion, we will show the density of shapes of periodic tori in $\SL(3,\R)/\SL(3,\Z)$ in \cref{ss:positive_units}.}
	
\end{enumerate}	
	\end{rem}
	
	%\subsection{Shape of the set of periods}

	\section{Shapes of periodic tori for a family of cubic fields}
	\label{ss:cubic-field}
	Let $3 \leq a_1 < a_2$ be integers.
	Consider the cubic polynomial 
	\begin{equation}
	\label{eq:duke_poly}
	   f(X)=X(X-a_1)(X-a_2)-1.
	\end{equation}
Denote by $\alpha$ the smallest root of $f$ and let $K=\mathbb{Q}(\alpha)$. Consider the order $\mathcal{O}=\Z[\alpha]$. 
The point of this polynomial is that one can control where the roots are, as shown in the upcoming lemma. 
This kind of polynomials was well studied and the field $K$ is sometimes called the simplest cubic field, see for example \cite{casselsProductInhomogeneousLinear1952} and \cite{ankenyNoteClassNumbersAlgebraic1956}.
	
	\begin{lem}
		\label{le:control_the_roots}
		Let $\alpha < \alpha_1 < \alpha_2$ be the three roots of $f(X)$. Then, 
		\begin{align}
			0 <\, & \alpha <  1 ,\\
			a_1 - 1 <\, & \alpha_1  < a_1, \\
			a_2 <\, & \alpha_2 < a_2 + 1.
		\end{align}
	\end{lem}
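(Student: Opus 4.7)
The plan is a direct application of the intermediate value theorem, evaluating $f$ at six strategically chosen integer points and then using that a cubic polynomial has at most three real roots.

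First, I would compute the signs at $0$ and $1$. At $X=0$ the product $X(X-a_1)(X-a_2)$ vanishes, so $f(0)=-1<0$. At $X=1$ we get
\[
f(1) = (1-a_1)(1-a_2) - 1 = (a_1-1)(a_2-1) - 1,
\]
which is strictly positive because $a_1 \geq 3$ and $a_2 \geq 4$ give $(a_1-1)(a_2-1) \geq 6$. By IVT, $f$ has a root in $(0,1)$.

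Next, I would evaluate at $X=a_1-1$ and $X=a_1$. Since the middle factor vanishes at $X=a_1$, we have $f(a_1)=-1<0$. At $X=a_1-1$,
\[
f(a_1-1) = (a_1-1)(-1)(a_1-1-a_2) - 1 = (a_1-1)(a_2-a_1+1) - 1,
\]
which is positive because $a_1-1 \geq 2$ and $a_2-a_1+1 \geq 2$. This produces a root in $(a_1-1,a_1)$. An analogous computation at $X=a_2$ (where $f(a_2)=-1<0$) and $X=a_2+1$ (where $f(a_2+1)=(a_2+1)(a_2+1-a_1)-1 > 0$, using $a_2+1-a_1 \geq 1$ and $a_2+1 \geq 5$) yields a third root in $(a_2,a_2+1)$.

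Since $f$ is a monic cubic, it has at most three real roots. We have exhibited three, lying in pairwise disjoint intervals that are already ordered as $(0,1) < (a_1-1,a_1) < (a_2,a_2+1)$, so these must be precisely $\alpha < \alpha_1 < \alpha_2$, with the claimed localizations. There is no real obstacle to this argument; the only thing to be careful about is verifying the strict inequalities in the sign computations, which all rely on the hypothesis $3 \leq a_1 < a_2$.
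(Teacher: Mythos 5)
Your proof is correct and follows essentially the same approach as the paper: evaluating $f$ at $0,1,a_1-1,a_1,a_2,a_2+1$, verifying the sign alternations, and invoking the intermediate value theorem together with the fact that a cubic has at most three real roots. The only cosmetic difference is that the paper notes $f(\mathbb{R}_{\leq 0}) \subseteq \mathbb{R}_{\leq 0}$ up front to rule out negative roots, whereas you rule them out via the at-most-three-roots argument at the end.
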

	\begin{proof}
		It is clear that $f(\mathbb{R}_{\leq 0}) \subseteq \mathbb{R}_{\leq 0}$.
		Observe that $f(0)=-1$ and $f(1) = (a_1-1)(a_2 - 1)-1 \geq 1$  since 
		$a_1,a_2 \in \mathbb{Z}_{\geq 2}$ and $a_2 > a_1$. Hence $  0 < \alpha < 1  $.
		
		Next, since $f(a_1) = -1$ and $f(a_1-1) = (a_1-1)(a_2-a_1 + 1) - 1 \geq 1$, we have 
		$a_1 -1 < \alpha_1 < a_1$.
		Similarly, $f(a_2) = -1$ and $f(a_2+1) = (a_2 + 1)(a_2-a_1 +1) - 1 > 0$. Hence,
		$a_2  < \alpha_2 < a_2+1$.
	\end{proof}
	Observe that the three integers $\alpha,\alpha-a_1, \alpha-a_2 \in \calO$ must 
	be units 
	since by definition
	\begin{equation}
		\alpha(\alpha-a_1)(\alpha-a_2) = 1.
	\end{equation}
	
	In the upcoming lemmas, we will try to understand
	the subgroup $\langle \alpha , \alpha-a_1 ,\alpha-a_2 \rangle \subseteq \calO^{\times}$.

	\begin{lem}
		\label{le:regulator_of_units}
		Let 
		$y_1 = \log a_1$, $y_2 = \log a_2$ and $y_3 = \log ( a_2 - a_1 )$. Consider the map $\psi:\calO^{\times} \rightarrow \mathfrak{a}$ defined in Equation (\ref{eq:defi_of_psi}). Then, we have that 
		\begin{equation}
			\frac{1}{\sqrt{3}}\mathrm{covol}\Big(\psi\big( \langle \alpha, \alpha-a_1 ,\alpha-a_2\rangle \big)\Big) = 
			\det\left(  
			\begin{smallmatrix}
				y_1+y_2 & y_1 \\
				y_1 & y_1 +y_3
			\end{smallmatrix}  
			\right)
			+ \varepsilon,
		\end{equation}
		where $\varepsilon$ is an error term satisfying
		\begin{equation}
			| \varepsilon | \ll  \max\lbrace y_1,y_2,y_3\rbrace \cdot \max\Big\lbrace \tfrac{1}{a_1}, \tfrac{1}{a_2-a_1} \Big\rbrace,
			\label{eq:described_error}
		\end{equation}
		with the implicit constant independent of $a_1,a_2$.
	\end{lem}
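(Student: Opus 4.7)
The plan is to (a) eliminate the redundant third generator via the Vieta identity $\alpha(\alpha-a_1)(\alpha-a_2)=1$, (b) expand each coordinate of $v_1:=\psi(\alpha)$ and $v_2:=\psi(\alpha-a_1)$ using \cref{le:control_the_roots}, and (c) translate the covolume in $\frak a$ into an ordinary $2\times 2$ determinant by projecting to the first two coordinates.

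For (a), applying $\psi$ to the Vieta identity gives $\psi(\alpha)+\psi(\alpha-a_1)+\psi(\alpha-a_2)=0$, so $\psi(\langle\alpha,\alpha-a_1,\alpha-a_2\rangle)$ has rank at most $2$ and is generated by $v_1$ and $v_2$. For (b), up to relabelling take $\sigma_1(\alpha)=\alpha$, $\sigma_2(\alpha)=\alpha_1$, $\sigma_3(\alpha)=\alpha_2$. From \cref{le:control_the_roots}, the four ``large'' entries $\log\alpha_1$, $\log\alpha_2$, $\log|\alpha-a_1|$, $\log(\alpha_2-a_1)$ agree with $y_1,y_2,y_1,y_3$ respectively up to errors $O(1/a_1)$, $O(1/a_2)$, $O(1/a_1)$, $O(1/(a_2-a_1))$ via $\log(1+t)=O(|t|)$ for $|t|<1/2$. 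The two ``small'' entries $\log|\alpha|$ and $\log|\alpha_1-a_1|$ are recovered by negating the sum of the other two factors in the relations $\alpha(\alpha-a_1)(\alpha-a_2)=1$ and $\alpha_1(\alpha_1-a_1)(\alpha_1-a_2)=1$, giving $\log|\alpha|=-(y_1+y_2)+O(1/a_1)$ and $\log|\alpha_1-a_1|=-(y_1+y_3)+O(\max\{1/a_1,1/(a_2-a_1)\})$. Collecting,
\begin{align*}
v_1 &= (-(y_1+y_2),\; y_1,\; y_2)+\eta_1,\\
v_2 &= (y_1,\; -(y_1+y_3),\; y_3)+\eta_2,
\end{align*}
where $\eta_1,\eta_2$ are coordinate-wise of size $O(\max\{1/a_1,1/(a_2-a_1)\})$.

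For (c), the projection $\pi:\frak a\to\R^2$ onto the first two coordinates is a linear isomorphism whose induced Gram matrix $\bigl(\begin{smallmatrix}2 & 1\\ 1 & 2\end{smallmatrix}\bigr)$ has determinant $3$. Hence $\mathrm{covol}(\Z v_1+\Z v_2)=\sqrt{3}\,|\det(\pi v_1,\pi v_2)|$. Substituting the expansions above, the leading part of the $2\times 2$ determinant is $(y_1+y_2)(y_1+y_3)-y_1^2$, which is precisely $\det\bigl(\begin{smallmatrix} y_1+y_2 & y_1\\ y_1 & y_1+y_3\end{smallmatrix}\bigr)$. Expanding the cross-terms, the correction is bounded by $O\bigl(\max_i y_i\cdot\max\{1/a_1,1/(a_2-a_1)\}\bigr)$; dividing by $\sqrt{3}$ yields the stated formula with $\varepsilon$ satisfying \eqref{eq:described_error}.

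The only delicate point is pinning down the sharp error rate for the two ``small'' entries: a naive inspection gives no information on $|\alpha|$ or $|\alpha_1-a_1|$ beyond boundedness, so one genuinely must push those estimates through the defining polynomial. Once that is done, the remaining computations are bookkeeping.
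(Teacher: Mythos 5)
Your proposal is correct and follows essentially the same path as the paper's proof: eliminate the third generator via the Vieta identity, use the root estimates from \cref{le:control_the_roots} together with the relations $\log|\alpha|=-\log|\alpha-a_1|-\log|\alpha-a_2|$ (and the analogous one at $\alpha_1$) to control the two non-small-error entries, and compute the first-two-coordinates $2\times2$ minor. The only cosmetic difference is that you justify $\frac{1}{\sqrt3}\mathrm{covol}=|\det(\pi v_1,\pi v_2)|$ by an explicit Gram-matrix computation, whereas the paper simply invokes the standard fact that the covolume of a rank-two lattice in $\mathfrak a$ equals $\sqrt3$ times the absolute value of any $2\times2$ minor of its $2\times3$ period matrix; these are the same computation written two ways.
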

	\begin{proof}
		Firstly, note that 
		\begin{equation}
			\langle \alpha, \alpha - a_1, \alpha - a_2 \rangle 
			= 
			\langle \alpha, \alpha - a_1 \rangle , 
		\end{equation}
		so we can ignore the third. 
		The conjugates of $\alpha- a_i$ are $\alpha_1 - a_i$ and $\alpha_2-a_i$ where $i=1,2$ and $\alpha_1,\alpha_2$ are as in Lemma \ref{le:control_the_roots}. 
		The number $	\frac{1}{\sqrt{3}}\mathrm{covol}(\psi\left( \langle \alpha, \alpha-a_1\rangle \right))$ equals the absolute value of the determinant of any $2\times 2$ minor in
		\begin{equation}
			\left(
			\begin{matrix}
				\log|\alpha | & \log |\alpha_1| & \log |\alpha_2| \\
				\log|\alpha  - a_1| & \log |\alpha_1 - a_1| & \log |\alpha_2 - a_1|\\
			\end{matrix}
			\right).
		\end{equation}
		Hence, we want to find the value of 
		\begin{equation}
			\det\left(
			\begin{matrix}
				\log|\alpha | & \log |\alpha_1| \\
				\log|\alpha - a_1 | & \log |\alpha_1 - a_1| 
			\end{matrix}
			\right).
		\end{equation}
		Next, observe that $\log |\alpha | = - \log |\alpha - a_1| - \log |\alpha - a_2|$ and $\log|\alpha_1 - a_1| = - \log |\alpha_1| - \log |\alpha_1 - a_2|$ so we get that the determinant is equal to 
		\begin{equation}
			\det\left(
			\begin{matrix}
				- \log|\alpha -a_1 | - \log|\alpha-a_2| & \log |\alpha_1| \\
				\log|\alpha - a_1 | & - \log |\alpha_1| - \log |\alpha_1 - a_2|  \\
			\end{matrix}
			\right).
		\end{equation}
		
		Now, we want to replace $\alpha_1$ with $a_1$. The difference in this approximation is 
		\begin{equation}
			| \log\alpha_1 - \log a_1 |  <  \log\left( \tfrac{a_1}{a_1 - 1} \right) \ll \tfrac{1}{a_1}.
		\end{equation}
		Similarly, one can show that for $i=1,2, j =1,2, i \neq j$
		\begin{align}
			|\log | \alpha - a_i | - \log a_i| \ll & \tfrac{1}{a_1} ,\\
			|\log | \alpha_j - a_i | - \log |a_j- a_i|| \ll & \tfrac{1}{|a_i - a_j|}.
		\end{align}
		For $i,j = 1,2$, we can then have some error terms $\varepsilon_{ij}$ with
		\begin{equation}
			|\varepsilon_{ij}| \ll  \max\{\tfrac{1}{a_1}, \tfrac{1}{a_2-a_1} \},
		\end{equation}
		such that the determinant can be written as
		\begin{equation}
			\det\left(
			\begin{smallmatrix}
				- y_1 - y_2 + \varepsilon_{11} & y_1 + \varepsilon_{12} \\
				y_1 + \varepsilon_{21} & - y_1 - y_3  + \varepsilon_{22} \\ 
			\end{smallmatrix}
			\right)
			=  
			\det\left(
			\begin{smallmatrix}
				- y_1 - y_2 & y_1  \\
				y_1   & - y_1 - y_3    \\
			\end{smallmatrix} 
			\right) 
			+ C \varepsilon_{0} \max\{\tfrac{1}{a_1}, \tfrac{1}{a_2-a_1}\}
			.
		\end{equation}
		where $|\varepsilon_0 | \leq  \max\{y_1,y_2,y_3\}$ and $C>0$ is some constant large enough.
	\end{proof}
	
	Observe that 
	\begin{equation}
		\det\left(
		\begin{smallmatrix}
			y_1 + y_2 & y_1 \\
			y_1 & y_1 + y_3
		\end{smallmatrix}
		\right)
		= y_1 y_2 + y_2 y_3 + y_2 y_3.
	\end{equation}
	
	We will write another lemma to calculate the discriminant of $\calO = \mathbb{Z}[\alpha]$.
	\begin{lem}
		\label{le:discriminant}
		Let $y_1,y_2,y_3$ be as in Lemma \ref{le:regulator_of_units}. Then, we have that 
		\begin{equation}
			\frac{1}{2}\log (\disc\calO/4 )   = y_1 + y_2 + y_3 + \varepsilon,
		\end{equation}
		where 
		$
		| \varepsilon | \leq  C \max\left\{\tfrac{1}{a_1},\tfrac{1}{a_2-a_1}\right\}+\log 2$
		for some suitable $C > 0$.
	\end{lem}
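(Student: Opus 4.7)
The plan is to start from the discriminant formula recorded in \cref{re:example_of_order}, which for $\calO = \Z[\alpha]$ with roots $\alpha < \alpha_1 < \alpha_2$ of $f$ gives
\begin{equation*}
\disc(\calO) = \big((\alpha - \alpha_1)(\alpha - \alpha_2)(\alpha_1 - \alpha_2)\big)^2.
\end{equation*}
Taking half of the logarithm turns this into the sum
\begin{equation*}
\tfrac{1}{2}\log \disc(\calO) = \log|\alpha - \alpha_1| + \log|\alpha - \alpha_2| + \log|\alpha_1 - \alpha_2|,
\end{equation*}
and these three quantities are exactly the natural candidates for approximations by $y_1 = \log a_1$, $y_2 = \log a_2$, $y_3 = \log(a_2 - a_1)$ respectively.

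The next step is therefore to replace each root difference by the corresponding integer, reusing the same type of error estimate already used in \cref{le:regulator_of_units}. From \cref{le:control_the_roots} one has $\alpha \in (0,1)$ and $\alpha_1 \in (a_1-1, a_1)$, so $\alpha_1 - \alpha \in (a_1 - 2, a_1)$ and the ratio $|\alpha - \alpha_1|/a_1$ lies in $(1 - 2/a_1, 1)$; its logarithm is thus $O(1/a_1)$. Similarly $|\alpha - \alpha_2| = \alpha_2 - \alpha \in (a_2 - 1, a_2 + 1)$ yields $\log|\alpha-\alpha_2| - y_2 = O(1/a_2) = O(1/a_1)$, and $|\alpha_1 - \alpha_2| = \alpha_2 - \alpha_1 \in (a_2 - a_1, a_2 - a_1 + 2)$ gives $\log|\alpha_1 - \alpha_2| - y_3 = O(1/(a_2 - a_1))$. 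Summing the three estimates produces
\begin{equation*}
\tfrac{1}{2}\log\disc(\calO) = y_1 + y_2 + y_3 + O\!\left(\max\big\{\tfrac{1}{a_1}, \tfrac{1}{a_2 - a_1}\big\}\right).
\end{equation*}
Subtracting $\tfrac{1}{2}\log 4 = \log 2$ and folding that constant into the error term gives exactly the form of the bound claimed, with the additive $\log 2$ explaining the somewhat awkward shape of the stated $\varepsilon$.

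I do not expect a genuine obstacle in this argument. The only care required is to verify the signs when using \cref{le:control_the_roots} (so that each root difference is genuinely close to the corresponding positive integer $a_1$, $a_2$, or $a_2 - a_1$ rather than its negative), and to choose the implicit constant $C$ large enough to absorb the three separate $O(1/a_i)$ errors and the constants hidden in $\log(1 - 2/a_1)$-type expansions. Everything else is a direct mimic of the approximation bookkeeping carried out in the proof of \cref{le:regulator_of_units}.
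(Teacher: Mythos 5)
Your argument is correct and matches the paper's own proof essentially line by line: both start from the discriminant formula of \cref{re:example_of_order}, use the root-location bounds of \cref{le:control_the_roots} to control $\log|\alpha_1-\alpha|-y_1$, $\log|\alpha_2-\alpha|-y_2$, $\log|\alpha_2-\alpha_1|-y_3$ by $O(\max\{1/a_1,1/(a_2-a_1)\})$, and fold the $\log 2$ from $\tfrac12\log 4$ into the error term. You have simply made explicit the error bookkeeping that the paper leaves implicit.
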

	\begin{proof}
		As discussed in Example \ref{re:example_of_order}, we have
		\begin{equation}
			\tfrac{1}{2}\log\disc(\calO) =    \log |\alpha_1- \alpha| + \log |\alpha_2- \alpha| + \log|\alpha_2 - \alpha_1|,
		\end{equation}
		with $\alpha,\alpha_1,\alpha_2$ as in Lemma \ref{le:control_the_roots}. Then, since we know fom Lemma \ref{le:control_the_roots} that
		\begin{align}
			a_1 -2	< \, & |\alpha_1 -\  \alpha|  < a_1, \\
			a_2 -a_1 < \, & |\alpha_2 - \alpha_1| < a_2 -a_1 + 2 ,\\
			a_2 -1 < \,& |\alpha_2 - \  \alpha |   < a_2 + 1.
		\end{align}
		We then again replace $\log(\alpha_1-\alpha) \simeq  y_1, \log(\alpha_2 - \alpha) \simeq y_2, \log(\alpha_2 - \alpha_1) \simeq y_3$. 
		If we keep track of the error, we get the required term.
	\end{proof}

	\begin{prop}\label{pr:proposition_polynom-units-orders}
		There exists a $C>0$ such that whenever $a_2-a_1, a_1 > C$, we get that the
		group of units of the order $\mathcal{O} = \Z[\alpha]$ is generated by $\alpha$ and $\alpha-a_1$ i.e. 
		$$\mathcal{O}^\times = \langle \alpha, \alpha - a_1,  \pm 1 \rangle.$$	
	\end{prop}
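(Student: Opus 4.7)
The approach is to apply Cusick's lower bound on the regulator (\cref{th:cusick}) together with the explicit estimates from \cref{le:regulator_of_units,le:discriminant} to force the index of $\langle\alpha,\alpha-a_1\rangle$ in $\calO^\times/\{\pm 1\}$ to be $1$ once $a_1$ and $a_2-a_1$ are large. Since $\psi$ has kernel exactly $\{\pm 1\}$ on $\calO^\times$ (there being no nontrivial roots of unity in the real field $K$), this is equivalent to the proposition.

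The first step is to verify that $\psi(\langle\alpha,\alpha-a_1\rangle)$ is a genuine rank-two sublattice of $\Lambda(\calO)$. Expanding
\[\det\begin{pmatrix}y_1+y_2 & y_1\\ y_1 & y_1+y_3\end{pmatrix}= y_1y_2+y_1y_3+y_2y_3,\]
the main term in \cref{le:regulator_of_units} is of order $(\log a_2)^2$ while the error is bounded by a multiple of $\log a_2\cdot\min(a_1,a_2-a_1)^{-1}$, so the covolume is strictly positive for sufficiently large $a_1,a_2-a_1$. Writing $n$ for the resulting finite index $[\Lambda(\calO):\psi(\langle\alpha,\alpha-a_1\rangle)]$ and using $\mathrm{covol}(\Lambda(\calO))=\sqrt{3}\,R(\calO)$, I would combine \cref{le:regulator_of_units} with \cref{th:cusick} and \cref{le:discriminant} to obtain
\[n\cdot R(\calO)=y_1y_2+y_1y_3+y_2y_3+\varepsilon_1,\qquad R(\calO)\geq \tfrac{1}{4}(y_1+y_2+y_3+\varepsilon_2)^2.\]

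Dividing these relations and invoking the elementary inequality $(y_1+y_2+y_3)^2\geq 3(y_1y_2+y_1y_3+y_2y_3)$, which is equivalent to $\sum_{i<j}(y_i-y_j)^2\geq 0$, I would conclude
\[n\leq \tfrac{4}{3}+o(1)\qquad\text{as }a_1,a_2-a_1\to\infty.\]
Because $n$ is a positive integer, this forces $n=1$ once $a_1$ and $a_2-a_1$ exceed an explicit constant $C$, proving the proposition. The only technical point is bookkeeping the errors $\varepsilon_1,\varepsilon_2$ from \cref{le:regulator_of_units,le:discriminant}: the main terms grow quadratically in $\log a_2$ while the errors either decay or are merely additive and bounded, so both corrections are absorbed into $o(1)$ and no real obstacle arises beyond tracking constants. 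It is worth noting that the sharp constant $\tfrac{1}{16}$ in Cusick's theorem is essential here, since it yields the bound $n\leq\tfrac{4}{3}<2$ needed for the integrality argument to close.
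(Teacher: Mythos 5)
Your proposal is correct and follows the same route as the paper: sandwich the index $n = R(\Theta)/R(\calO)$ between $1$ and $\tfrac{4}{3}(1+o(1))$ by combining Cusick's bound with \cref{le:regulator_of_units,le:discriminant} and the elementary inequality $(y_1+y_2+y_3)^2 \geq 3(y_1y_2+y_2y_3+y_3y_1)$, then conclude by integrality. The only cosmetic difference is that you make explicit the preliminary check that $\psi(\langle\alpha,\alpha-a_1\rangle)$ has rank two (which the paper leaves implicit); otherwise the argument and the error bookkeeping are the same.
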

	
	\begin{proof}
		It is clear that $\alpha,\alpha - a_1,\alpha -a_2$ are non-torsional units in $\calO^{\times}$, since $\pm 1$ are the only torsional real units. Also, among $\alpha,\alpha-a_1,\alpha-a_2$, we get that any two of them generate the third since they multiply to be equal to 1. 
		
		Let $\Theta \subseteq \calO^{\times}$ be the subgroup generated by $\alpha,\alpha-a_2$. 
		Recall the map $\psi : \calO^{\times} \rightarrow \mathfrak{a}$ defined in Section \ref{ss:nt_prelims}.
		Let $R(\Theta)$ be the covolume of the lattice $\psi(\Theta) \subseteq \mathfrak{a}$ divided by $\sqrt{3}$.
		We know by Theorem \ref{th:cusick}
		that 
		\begin{equation}
			\frac{1}{16}   \log^2 (\disc\calO/4) 
			\leq R(\calO)   \leq R(\Theta)  .
		\end{equation}
		We can then subsitute from Lemma \ref{le:discriminant} that
		\begin{align}
			\frac{4}{16} \log^2 (\disc\calO/4)  & =  (y_1+y_2 +y_3 + \varepsilon)^{2} \\
			& = y_1^{2} + y_2^{2} + y_{3}^{2} + 2(y_1y_2 + y_2y_3 + y_3 y_1) +2 \varepsilon(y_1 + y_2 + y_3)+\varepsilon^2\\
			& \geq 3(y_1 y_2 + y_2 y_3 + y_3 y_1) +2 \varepsilon(y_1 + y_2 + y_3),
		\end{align}
		where the error term $\epsilon$ is bounded by $\max\{\tfrac{1}{a_1}, \tfrac{1}{a_2 - a_1}\}+\log 2$.
		Note that in the last inequality, we are using
		\begin{equation}
			y_1^{2} + y_2^{2}+y_3^{2}  - y_1y_2 - y_2 y_3 - y_1y_3\geq \tfrac{1}{2}(y_1-y_2)^{2}+ \tfrac{1}{2}(y_1-y_3)^{2}+ \tfrac{1}{2}(y_2-y_3)^{2} \geq 0.
		\end{equation}
		On the other hand, we know by Lemma \ref{le:regulator_of_units}
		that 
		\begin{equation}
			R(\Theta ) = y_1 y_2 + y_2 y_3 + y_3 y_1 + \varepsilon_1,
		\end{equation}
		where $\varepsilon _1 $ is as described in Equation \ref{eq:described_error}. Denote $\varepsilon_2 =\tfrac{1}{2} \varepsilon(y_1 + y_2 + y_3)$.

		Now, we know that 
		\begin{equation}
			1 \leq   \frac{R(\Theta)}{R(\calO)} \leq \frac{  y_1y_2 + y_1 y_3 + y_2 y_3 + \varepsilon_1 }{ \tfrac{3}{4}( y_1 y_2 + y_1 y_3 + y_2 y_3  )+ \varepsilon_2} \leq  \tfrac{4}{3} \left(\frac{1+ \varepsilon_1{'}}{ 1 + \varepsilon_2'}\right).
		\end{equation}
		Here, for $i=1,2$
		\begin{equation}
			|\varepsilon_i'| = \frac{|\varepsilon_i|}{y_1y_2 + y_2y_3 + y_3y_1} \ll \max\left\{\tfrac{1}{\log a_1}, \tfrac{1}{\log(a_2-a_1)} \right\}.
		\end{equation}
		To show this last inequality, we use that $ y_1y_2+y_2y_3+y_3y_1\geq \max\{y_1,y_2,y_3\}\min\{y_1,y_2,y_3 \}$.
		
		Hence, it follows that if $a_1,a_2-a_1$ are large enough, we have that that $R(\Theta)/R(\calO)$ must be an integer smaller than $5/3$. Hence, it is $1$ and $\Theta = \calO^{\times}$.
	\end{proof}

	\subsection{Producing a bounded set of shapes}
	
For all $a_1,a_2 \in \N$, denote by 
	\begin{equation}
		\label{eq:denote_order_a_1a_2}
		\mathcal{O}^{(a_1,a_2)} = {\mathbb{Z}[X]}/{\langle X(X-a_1)(X-a_2)-1 \rangle}.
	\end{equation}
	Let us try to locate the shape $[\Lambda(\mathcal{O}^{(a_1,a_2)})]$ of $\mathcal{O}^{(a_1,a_2)}$.
	
	\begin{lem}
		\label{le:where_is_shape}
		Let $a_1 , a_2 \in \N$ be such that $a_1,a_2-a_1 > C$ for the constant $C>0$ in Proposition \cref{pr:proposition_polynom-units-orders}.
		Let $\lambda = \log a_1 / \log a_2$. Then the shape $[ \Lambda(\mathcal{O}^{(a_1,a_2)}) ]$ is the same as 
		\begin{equation}
			\mathbb{Z}v_1 + \mathbb{Z} v_2  \ \subseteq \ \mathfrak{a} = \{x \in \mathbb{R}^{3} \mid x_1 + x_2 + x_3 = 0 )\},
		\end{equation}
		where the vectors $v_1,v_2$ are given by
		\begin{align}
			v_1 &=\psi(\alpha)/\log a_2   = (-1-\lambda,\lambda,1)  + \varepsilon_1, \\ 
			v_2 &=\psi(\alpha-a_1)/\log a_2  = (\lambda,-1-\lambda,1) + \varepsilon_2.
		\end{align}
		with $|\varepsilon_1|, |\varepsilon_2| \ll  \max\{\frac{1}{a_1}, \frac{1}{a_2},\frac{1}{a_2-a_1} + \frac{a_1}{a_2}\}$.
	\end{lem}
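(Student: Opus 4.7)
The plan is a direct computation of $\psi(\alpha)$ and $\psi(\alpha-a_1)$ using the control on the roots from \cref{le:control_the_roots}, combined with the identification of the generators provided by \cref{pr:proposition_polynom-units-orders}. First I will invoke \cref{pr:proposition_polynom-units-orders}: under the hypothesis $a_1,a_2-a_1>C$, we have $\calO^\times=\langle \alpha,\alpha-a_1,\pm 1\rangle$, and since $\psi(\pm 1)=0$ this gives $\Lambda(\calO^{(a_1,a_2)})=\psi(\calO^\times)=\Z\psi(\alpha)+\Z\psi(\alpha-a_1)$. Because the shape $[\,\cdot\,]$ is defined only up to positive scalar multiplication, I may rescale by $1/\log a_2$, which reduces the problem to estimating the six coordinates of $v_1$ and $v_2$.

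For the three entries of $\psi(\alpha)$, the middle and last coordinates $\log|\alpha_1|$ and $\log|\alpha_2|$ are immediately comparable to $\log a_1$ and $\log a_2$ via \cref{le:control_the_roots} and the elementary bound $|\log x-\log y|\leq|x-y|/\min(x,y)$, which yields errors $O(1/a_1)$ and $O(1/a_2)$ after division by $\log a_2$. The first coordinate $\log|\alpha|$ is small but I avoid estimating it directly: instead I use the defining relation $\alpha(\alpha-a_1)(\alpha-a_2)=1$ to write $\log|\alpha|=-\log|\alpha-a_1|-\log|\alpha-a_2|$, and then approximate $|\alpha-a_i|$ by $a_i$. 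I apply the same trick to $\psi(\alpha-a_1)$: the entry $\log|\alpha_1-a_1|$ is not directly controlled by \cref{le:control_the_roots}, but from $\alpha_1(\alpha_1-a_1)(\alpha_1-a_2)=1$ I substitute $\log|\alpha_1-a_1|=-\log|\alpha_1|-\log|\alpha_1-a_2|$, with $|\alpha_1|\approx a_1$ and $|\alpha_1-a_2|\approx a_2-a_1$. The remaining entries of $v_2$ reduce to $\log a_1$ and $\log(a_2-a_1)$ by the same root control.

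The one step that requires care is comparing $\log(a_2-a_1)$ to $\log a_2$, since this appears in the second and third coordinates of $v_2$ but the lemma's target expression only contains $\log a_2$. Writing $\log(a_2-a_1)=\log a_2+\log(1-a_1/a_2)$ and dividing by $\log a_2$, the discrepancy from $1$ is $|\log(1-a_1/a_2)|/\log a_2$. When $a_1\le a_2/2$ this is $O(a_1/a_2)$ by the standard bound $|\log(1-t)|\le 2t$ for $t\le 1/2$; when $a_1>a_2/2$ the discrepancy is at most $1$ but so is $a_1/a_2$ up to a constant, and this is exactly why the term $a_1/a_2$ enters the error bound. Combining this with the $O(1/a_1)$, $O(1/a_2)$, $O(1/(a_2-a_1))$ errors from the root approximations produces the stated bound on $|\varepsilon_1|$ and $|\varepsilon_2|$. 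The main obstacle, such as it is, lies in this bookkeeping of error contributions and in making sure the $a_1/a_2$ term is accounted for; the rest is routine Taylor expansion.
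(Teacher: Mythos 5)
Your proof is correct and follows essentially the same route as the paper: invoke \cref{pr:proposition_polynom-units-orders} to reduce to the $\Z$-span of $\psi(\alpha)$ and $\psi(\alpha-a_1)$, estimate the readily controlled coordinates via \cref{le:control_the_roots}, recover the remaining unbounded coordinate from a linear relation among the three logarithms, and rescale by $\log a_2$. The only cosmetic difference is that you use the minimal-polynomial identity $\alpha_i(\alpha_i-a_1)(\alpha_i-a_2)=1$ at each root, whereas the paper uses the trace-zero identity for $\psi$ on units (e.g.\ $\log|\alpha-a_1|+\log|\alpha_1-a_1|+\log|\alpha_2-a_1|=0$); both avoid estimating the small quantities $\log|\alpha|$ and $\log|\alpha_1-a_1|$ directly. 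Your explicit two-case treatment of $|\log(1-a_1/a_2)|/\log a_2$ is in fact slightly more careful than the paper's, which asserts $\log|\alpha_2-a_1|=\log a_2+O\bigl(\tfrac{1}{a_2-a_1}+\tfrac{a_1}{a_2}\bigr)$ before dividing by $\log a_2$ — a claim that fails with an absolute implied constant when $a_1$ is close to $a_2$ — though the lemma's final post-division error bound is unaffected.
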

	\begin{proof}
		Recall the map $\psi$ defined in \cref{eq:defi_of_psi}.
		According to \cref{pr:proposition_polynom-units-orders} because $C \gg 1$ is large enough, we can realize $\Lambda(\mathcal{O}^{(a_1,a_2)})$ as the $\mathbb{Z}$-span of $\psi(\alpha)$ and $\psi(\alpha-a_1)$. Writing diagonal matrices as $\mathbb{R}^{3}$, we are looking at the lattice spanned by 
		\begin{align}
			\psi(\alpha) & =  \left( -\log \alpha_2 -\log \alpha_1 \,,\, \log\alpha_1 \,,\, \log\alpha_2\right) ,\\
			\psi(\alpha - a_1) & =  \left( \log|\alpha - a_1| \, , \, -\log|\alpha - a_1| -\log|\alpha_2 - a_1| , \log|\alpha_2 - a_1|  \right) .
		\end{align}
		We observe that $\log \alpha_1 = \lambda \log a_2 + O(\frac{1}{a_1})$, $\log \alpha_2 = \log a_2  + O(\frac{1}{a_2})$ and $\log|\alpha - a_1|=\lambda\log a_2+O(\frac{1}{a_1})  $, $\log|\alpha_2-a_1|= \log |a_1-a_2|  + O(\frac{1}{|a_2-a_1|}) = \log a_2 + O(\frac{1}{a_2-a_1} + \frac{a_1}{a_2})$. Since the shape point is invariant under scaling, we can divide these two vectors by $\log a_2$ and get the desired result.
	\end{proof}
	
	Note that the angle between the vectors $v_1,v_2$ defined in \cref{le:where_is_shape} is roughly
	\begin{equation}
		\label{eq:angle}
		\frac{\langle v_1 , v_2 \rangle}{\|v_1\| \|v_2\|} \simeq  -1+\frac{3}{2(1+\lambda + \lambda^{2})}.
	\end{equation}
	As $\lambda$ varies between $0$ and $1$, the expression on the right varies between $-1/2$ and $1/2$. The error in this approximation is the same as the errors in \cref{le:where_is_shape}.
	
	This lets us claim the following proposition.
	First define
	\begin{equation}
		\mathcal{C} = \{ \calO^{(a_1,a_2)} \mid a_2 -a_1,a_1 > C\},
	\end{equation}
	where $C>0$ is the constant defined in \cref{pr:proposition_polynom-units-orders}.
	
	Let $\mathbb{H} \simeq \SL_{2}(\mathbb{R})/\SO_2(\mathbb{R})$ be the upper half space identified with $\{x + i y \mid y>0\} \subseteq \mathbb{C}$.
	Denote $[\ ]_{\mathbb{H}}  : \SL_{2}(\mathbb{R})/\SL_{2}(\mathbb{Z}) \rightarrow \SL_{2}(\mathbb{Z})\backslash \mathbb{H}$ the canonical map given by $g \mapsto g^{-1} \cdot i$ where $\cdot $ is the action via fractional linear transformations and $i \in \mathbb{H}$ is the identity coset in the upper half plane. It is a classical fact that a fundamental domain in $\mathbb{H}$ for the action of $\SL_2(\mathbb{Z})$ is contained in
	\begin{equation}
		\Big\lbrace x + i y \; \Big\vert \; |x| \leq \tfrac{1}{2} , |x^{2} + y^{2}| \geq 1 \Big\rbrace \subseteq \mathbb{H}.
	\end{equation}
	We recover a result from David-Shapira \cite{davidShapesUnitLattices2017} and Cusick \cite{cusickRegulatorSpectrumTotally1991}.
	\begin{prop}\label{prop-bounded-shape}
		The closure of the set
		$ \lbrace [[\Lambda(\calO)]]_{\mathbb{H}} \rbrace_{\calO \in \calC} \subset \SL(2,\Z) \backslash \mathbb{H}$ contains 
		$\SL_{2}(\mathbb{Z}) B_{0}$ where 
		\begin{equation}
			B_{0} = \Big\lbrace x + i y \; \Big\vert \; |x| \leq \tfrac{1}{2} , |x^{2} + y^{2}| = 1 \Big\rbrace.
		\end{equation}
	\end{prop}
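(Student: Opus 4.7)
The plan is to exploit the explicit description of the shape from \cref{le:where_is_shape} together with the symmetry between the two generating vectors $v_1, v_2 \in \mathfrak{a}$, and then let the parameters $a_1, a_2 \to \infty$ along suitable sequences to kill the error terms.

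First I would observe the key symmetry: modulo the error terms, $v_2$ is obtained from $v_1$ by swapping the first two coordinates, so $\|v_1\| = \|v_2\|$ with respect to the inner product on $\mathfrak{a}$ induced from $\R^3$. Any covolume-one lattice in $\R^2$ whose generators have equal length has a shape lying on the unit semicircle $\{e^{i\theta}\} \subset \mathbb{H}$ (after putting one generator on the real axis, the other lies on the unit circle), and the angle $\theta$ between the generators determines the shape point: it is exactly $e^{i\theta}$, or its image under $z \mapsto z-1$ to bring it into the standard fundamental domain. Thus, ignoring errors, the shape of $\calO^{(a_1,a_2)}$ lies on the arc $\{e^{i\theta} : |\cos\theta| \leq 1/2\} = B_0$.

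Next I would use the explicit formula for $\cos\theta$ noted in \cref{eq:angle}, namely $\cos\theta \simeq -1 + \tfrac{3}{2(1+\lambda+\lambda^{2})}$ with $\lambda = \log a_1/\log a_2$. As $\lambda$ ranges over $(0,1)$, this expression is a continuous strictly decreasing function taking every value in $(-1/2, 1/2)$. Correspondingly, $\theta$ ranges continuously over $(\pi/3, 2\pi/3)$, which parametrises the open arc $B_0 \setminus \{e^{i\pi/3}, e^{i2\pi/3}\}$. For any target $\lambda_0 \in [0,1]$ I would construct a sequence with $a_1^{(n)} = \lfloor n^{k_n}\rfloor$, $a_2^{(n)} = \lfloor n^{m_n} \rfloor$, where $k_n/m_n \to \lambda_0$ and $m_n, m_n-k_n \to \infty$; this ensures $a_1, a_2-a_1 \to \infty$, so that both the hypothesis $a_1,a_2-a_1 > C$ from \cref{pr:proposition_polynom-units-orders} holds eventually, and the error bounds in \cref{le:where_is_shape} (of order $\max\{1/a_1, 1/(a_2-a_1), a_1/a_2\}$) tend to zero.

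Finally I would conclude by continuity of the shape map: as the vectors $v_1^{(n)}, v_2^{(n)}$ converge (after rescaling to covolume one) to the exact vectors of \cref{le:where_is_shape} with parameter $\lambda_0$, the shape points converge in $\SL_2(\R)/\SL_2(\Z)$ to the corresponding point on the arc, and thus in $\mathbb{H}/\SL_2(\Z)$ to the corresponding point of $B_0$. Since every $\lambda_0 \in (0,1)$ is attainable and the endpoints are in the closure by taking $\lambda_0 \to 0^+$ or $\lambda_0 \to 1^-$, the closure contains all of $\SL_2(\Z) B_0$. The main technical obstacle will be verifying the continuity step rigorously: one must check that the map from a pair of generators in $\mathfrak{a}$ to the shape in $\mathbb{H}$ is continuous and compatible with the rescaling to covolume one, and confirm that the identification of $\mathfrak{a}$ with $\R^2$ used in \cref{de:shape} is the isometric one so that the angle computation genuinely reflects the hyperbolic location of the shape.
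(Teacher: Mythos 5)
Your proposal follows essentially the same route as the paper's proof: fix $\lambda = \log a_1/\log a_2$, take $a_1 = \lfloor a_2^\lambda\rfloor$ with $a_2 \to \infty$ so the error terms in \cref{le:where_is_shape} vanish, and use \cref{eq:angle} to see that the limiting rhombic shapes sweep out the arc $B_0$ as $\lambda$ varies in $(0,1)$. The details you add (the coordinate-swap symmetry forcing $\|v_1\|=\|v_2\|$, the explicit diagonal sequences, and the caveat that the identification $\mathfrak{a}\cong\R^2$ must be isometric) make explicit the steps the paper's terse proof leaves implicit, but do not constitute a different approach.
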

	\begin{proof}
		Following \cref{pr:proposition_polynom-units-orders}, given $a_1,a_2$ such that $a_2-a_1,a_1 > C$ we know where the shape lies due to \cref{le:where_is_shape}. For a fixed value of $\lambda$, we choose $a_1 = \lfloor a_2^{\lambda} \rfloor$ and let $a_2 \rightarrow \infty$.
		For this choice of $a_1,a_2$, the limiting shape $[ \Lambda(\mathcal{O}^{(a_1,a_2)}])$
		is just some lattice spanned by two vectors $v_1,v_2$ of length $\sqrt{2+ 2\lambda+2\lambda^{2}}$
		and angle between $\pi/3$ and $2\pi/3$ given by \cref{eq:angle}. As we vary $\lambda$ between 0 and 1, we get all the points in $\SL_2(\mathbb{Z}) B_0$ as claimed.
	\end{proof}

	\section{Units of suborders and a matrix congruence equation}
	\label{ss:suborders-matrix-eq}
	For any three distinct prime numbers $p_1, \; p_2, \; p_3$ and any orders in $\calO \in \calC$ and all integer exponents $c,\; d, \; r \in \N$ denote by
	$$\calO_{c,d,r} = \Z + p_1^c p_2^d p_3^r\calO .$$
	Note that $\calO_{c,d,r}^\times = \calO^\times \cap \calO_{c,d,r}$.
	
	We first show in this lemma that the local behaviour of the three primes $p_1,p_2,p_3$ is independent of each 
	other.
	\begin{lem}
		\label{le:local_behaviour}
		We have 
		\begin{equation}
			\Z + p_1^c p_2^d p_3^r\calO = \left( \Z + p_1^{c}\calO\right) \cap \left( \Z + p_2^{d}\calO\right) \cap \left( \Z + p_3^{r}\calO\right)   .
		\end{equation}
	\end{lem}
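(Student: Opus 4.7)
The plan is to check the two inclusions separately, with the nontrivial direction coming from a double application of the Chinese Remainder Theorem. The inclusion $\subseteq$ is immediate: any element $n + p_1^c p_2^d p_3^r y$ with $n \in \Z$ and $y \in \calO$ can be rewritten as $n + p_i^{e_i}(\cdots)$ for each $i \in \{1,2,3\}$ (with the obvious convention $(e_1,e_2,e_3)=(c,d,r)$), and hence lies in all three sets on the right-hand side.

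For the reverse inclusion, I would exploit that $p_1,p_2,p_3$ are distinct primes, so the ideals $p_1^c\calO$, $p_2^d\calO$, $p_3^r\calO$ of $\calO$ are pairwise coprime: coprimality already holds in $\Z \subset \calO$ via B\'ezout applied to the integers $p_i^{e_i}$. Hence CRT in $\calO$ yields an isomorphism
\begin{equation*}
\calO/p_1^c p_2^d p_3^r \calO \;\xrightarrow{\sim}\; \calO/p_1^c\calO \times \calO/p_2^d\calO \times \calO/p_3^r\calO,
\end{equation*}
through the natural projections, which I denote $\pi_i$, with joint projection $\pi$. Under this identification, the hypothesis $x \in \Z + p_i^{e_i}\calO$ translates exactly to $\pi_i(x) \in \pi_i(\Z)$ for $i = 1,2,3$, whereas the desired conclusion $x \in \Z + p_1^c p_2^d p_3^r\calO$ amounts to $\pi(x) \in \pi(\Z)$.

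The remaining step, which is the only thing requiring any thought, is to promote the three individual conditions to the joint one. For this I would apply CRT a second time, now to $\Z$: since $p_1^c, p_2^d, p_3^r$ are pairwise coprime integers, any compatible triple of residues comes from a single integer, giving
\begin{equation*}
\pi(\Z) \;=\; \pi_1(\Z)\times \pi_2(\Z)\times \pi_3(\Z).
\end{equation*}
Combining the two CRT statements yields $\pi(x)\in\pi(\Z)$, hence $x - n \in p_1^c p_2^d p_3^r\calO$ for some $n\in\Z$, finishing the argument. There is no serious obstacle; the only point to verify carefully is the coprimality of the three ideals inside $\calO$, which reduces immediately to coprimality of the generators $p_i^{e_i}$ in $\Z$.
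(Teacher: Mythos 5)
Your proof is correct and rests on the same key ingredient as the paper's, namely the Chinese Remainder Theorem in $\Z$ applied to the pairwise coprime moduli $p_1^c, p_2^d, p_3^r$. The only cosmetic difference is that you also invoke CRT in $\calO$ explicitly to set up the product decomposition of $\calO/p_1^c p_2^d p_3^r\calO$, whereas the paper reaches the same conclusion by lifting the integer part $a$ directly and implicitly using $\bigcap_i p_i^{e_i}\calO = p_1^c p_2^d p_3^r\calO$; both are the same argument.
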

	\begin{proof}
		It is clear that 
		\begin{equation}
			\Z + p_1^c p_2^d p_3^r\calO 
			\subseteq 
			\left( \Z + p_1^{c}\calO\right) \cap \left( \Z + p_2^{d}\calO\right) \cap \left( \Z + p_3^{r}\calO\right)   .
			\label{eq:intsection}
		\end{equation}
		Let us show the other direction. Suppose $b$ lies in the set on the right side of Equation (\ref{eq:intsection}). Then, this means that for some $a_1,a_2,a_3 \in \mathbb{Z}$ such that 
		\begin{align}
			b  \in (a_1 +  p_1^{c} \calO) \cap 
			(a_2 +  p_2^{d} \calO )\cap
			(a_3 +  p_3^{r} \calO).
		\end{align}
		But then, we can find a lift $a \in \mathbb{Z}$ {by the Chinese remainder theorem} such that 
		\begin{align}
			a \in (a_1 + p_1^{c} \Z )\cap (a_2 + p_2^{d} \Z )\cap (a_3 + p_3^{r} \Z ),
		\end{align}
		and this shows that $b \in a + p_1^{c}p_2^{d} p_3^{r} \calO$.
	\end{proof}
	
	We will now focus on what happens with respect to a single prime $p$. 
	\iffalse
	Here is a lemma about multiplicative structure of the ordoer $\calO$ modulo $p$.
	
	\begin{lem}
		\label{le:multiplicate_mod_p}
		Let $a_2 > a_1 > 0$ be some integers and $f(X) \in \mathbb{Z}[X]$ be the polynomial $X(X-a_1)(X-a_2)-1$. 
		Let $p$ be a prime such that $f(X)$ is seperable\footnote{
			This is condition is also the same as asking that $p$ is a non-ramified prime with respect to $\calO$.
		} modulo $p$, that is every irreducible factor of $f(X)$ appears with multiplicity $1$.
		
		Then, for the order $\mathcal{O}= \mathbb{Z}[X]/\langle f(x) \rangle$ given in \cref{eq:denote_order_a_1a_2},
		the order of the group $(\mathcal{O}/p\mathcal{O})^{\times}$ is coprime to $p$.
	\end{lem}
	\begin{proof}
		We then have the isomorphisms
		\begin{equation}
			\frac{\mathcal{O}}{p\mathcal{O}} \simeq \frac{\mathbb{Z}[X]}{\langle p, f(X) \rangle } \simeq \frac{\mathbb{F}_p[X]}{ \langle f(X) \rangle}.
		\end{equation}
		Then, the structure of the ring $\calO/p\mathcal{O}$ will depend on how $f(X)$ factorizes modulo $p$. For example, if $fX)$ splits into three linear factors modulo $p$, then $\calO/p\calO$ is isomorphic to three copies of $\mathbb{Z}/p\mathbb{Z} = \mathbb{F}_{p}$. 
		If $f(X)$ splits into linear times a quadratic factor modulo $p$, then $\calO/p\calO$ is isomoprhic to a $\mathbb{F}_{p} \times \mathbb{F}_{p^{2}}$ where $\mathbb{F}_{p^{2}}$ is the unique finite field containing $p^{2}$ elements.
		
		In both cases, the order of the unit group $(\calO/p\calO)^{\times}$ is not divisibile by $p$.
	\end{proof}
	\fi
	We will need the following lemma.
	\begin{lem}
		\label{le:order_mod_prime_power}
		Let $b \in \calO^{\times}$ be a non-torsional unit
		and $p$ be a prime.
		Then, there exists $l_1, l_2 \in \Z_{\geq 1}$, such that for all $c \geq 1$
		\begin{equation}
			b^{n} \in \Z + p^{c}\calO \Leftrightarrow n \in N\Z \text{ where }N =  l_1p^{\max ( c-l_2, 0)}. 
		\end{equation}
	\end{lem}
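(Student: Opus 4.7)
The plan is to pass to the $p$-adic completion $\calO_p := \calO \otimes_\Z \Z_p$, a free $\Z_p$-module of rank three, and rephrase the condition $b^n \in \Z + p^c\calO$ as $b^n \in \Z_p + p^c\calO_p$ inside $\calO_p^\times$ (this equivalence follows from density of $\Z$ in $\Z_p$ and the fact that $\calO \cap (\Z_p + p^c\calO_p) = \Z + p^c\calO$). First I would define $l_1$ as the smallest positive integer with $b^{l_1} \in \Z + p\calO$; equivalently, the order of the image of $b$ in the finite group $(\calO/p\calO)^\times/(\Z/p\Z)^\times$. Since $\Z + p^c\calO \subseteq \Z + p\calO$ for all $c \geq 1$, any valid $n$ must satisfy $l_1 \mid n$. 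Writing $b^{l_1} = z + pu$ with $z \in \Z$ coprime to $p$, I would set $b'' := b^{l_1}/z \in 1 + p\calO_p$, reducing the task to determining those $m \in \Z$ with $(b'')^m \in \Z_p + p^c\calO_p$, in which case $n = l_1 m$ is valid.

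For $p$ odd, I would use the $p$-adic logarithm $\log \colon 1 + p\calO_p \xrightarrow{\sim} p\calO_p$, which is a topological group isomorphism. Let $v := \log b''$, and fix a $\Z_p$-module decomposition $\calO_p = \Z_p \cdot 1 \oplus W$ with $W$ free of rank two, writing $v = v_0 + v'$ accordingly. A direct computation shows $(\Z_p + p^c\calO_p) \cap (1 + p\calO_p) = (1 + p\Z_p)(1 + p^c\calO_p)$, which is a multiplicative subgroup whose logarithm is $p\Z_p + p^c\calO_p = p\Z_p \oplus p^c W$ under the decomposition. Hence $(b'')^m \in \Z_p + p^c\calO_p$ is equivalent to the pair of conditions $m v_0 \in p\Z_p$ (automatic, since $v_0 \in p\Z_p$) and $m v' \in p^c W$.

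To finish, I would observe that non-torsionality forces $v' \neq 0$: otherwise $b'' = \exp(v_0) \in 1 + p\Z_p \subseteq \Z_p$, so $b^{l_1} = z b'' \in \calO \cap \Z_p = \Z$, and being a unit it would equal $\pm 1$, contradicting that $b$ is non-torsional. Hence $l_2 := \max\{l \geq 1 : v' \in p^l W\} \in \Z_{\geq 1}$ is well-defined, and the condition $m v' \in p^c W$ is precisely $p^{\max(c - l_2, 0)} \mid m$. Therefore $n = l_1 m$ runs through the subgroup $l_1 p^{\max(c - l_2, 0)} \Z$, which is the desired $N\Z$.

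The main obstacle will be the case $p = 2$, where the logarithm is only an isomorphism on $1 + 4\calO_2$ (the exponential fails to converge on $2\calO_2$ and both $\pm 1$ sit inside $1 + 2\Z_2$). I would handle this via the structural decomposition $1 + 2\calO_2 = \{\pm 1\} \cdot (1 + 4\calO_2)$, so that $\log$ descends to a well-defined group homomorphism $1 + 2\calO_2 \to 4\calO_2$ with kernel $\{\pm 1\}$; the same log-decomposition argument then applies with targets shifted to $4\calO_2$, and one checks by direct computation that $b^{l_1} \in \Z + 4\calO$ always holds when $p = 2$, forcing $l_2 \geq 2$ automatically so that the stated formula remains valid at the level $c = 1$.
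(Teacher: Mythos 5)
Your odd-$p$ argument via the $p$-adic logarithm is correct and takes a genuinely different route from the paper's proof, which defines $l_1,l_2$ exactly as you do but then runs a direct induction on $c$ via the binomial expansion of $(a+p^cy)^m$. Your version linearizes once and for all: after the reduction $\calO\cap(\Z_p+p^c\calO_p)=\Z+p^c\calO$ and the normalization $b''\in 1+p\calO_p$, the $p$-adic $\log$ converts the multiplicative condition into $mv'\in p^cW$, and the exponent $\max(c-l_2,0)$ drops out at once from $v'\neq 0$. Your identity $(\Z_p+p^c\calO_p)\cap(1+p\calO_p)=(1+p\Z_p)(1+p^c\calO_p)$ and the use of non-torsionality to rule out $v'=0$ are both correct. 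This is arguably cleaner than the paper's induction, which at each step implicitly needs the stronger fact $y\notin\Z+p\calO$ rather than merely $y\notin\Z$ (which is all the paper records).

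The $p=2$ section, however, has two incorrect assertions. The decomposition $1+2\calO_2=\{\pm1\}\cdot(1+4\calO_2)$ fails: the homomorphism $1+2u\mapsto u\bmod 2\calO_2$ shows $[1+2\calO_2:1+4\calO_2]=|\calO_2/2\calO_2|=2^3=8$, so $\{\pm1\}$ cannot close the gap. The follow-up claim that $b^{l_1}\in\Z+4\calO$ always holds (forcing $l_2\geq2$) when $p=2$ is also false; the paper's own computation in Section 5 finds $\alpha^7\equiv 2\alpha+1\pmod{(f(X),4)}$, which lies in $\Z+2\calO$ but not in $\Z+4\calO$, i.e.\ $l_2=1$ (their $j_1$). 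The obstruction you ran into is genuine: the lemma as stated can fail for $p=2$. For instance take $\calO=\Z[X]/\langle X^3+2X^2-6X-1\rangle$ (irreducible, discriminant $1229>0$, $\calO/2\calO\cong\F_2\times\F_4$) and $b=\alpha$. Then $l_1=3$ and $\alpha^3=1+2u$ with $u=3\alpha-\alpha^2$, $\bar u=(0,1)\notin\F_2$, so $l_2=1$; but $\bar u+\bar u^2=(0,0)$, hence $\alpha^6=1+4(u+u^2)\in\Z+8\calO$, whereas the asserted formula would require $3\cdot 2^{2}=12\mid 6$. So for $p=2$ the chain of index groups can plateau, and no single pair $(l_1,l_2)$ captures it; the paper's applications avoid this because there $\calO/2\calO\cong\F_8$ and the relevant constants are verified directly.
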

	\begin{proof}
		Since $b$ is a non-torsional unit, we know that for any $l\geq 1$, $b^{l} \notin \Z$. 
		It is clear that by $b^{l} \in 1 + p \calO$ for some $l\geq 1$. Indeed, this is because $b$ maps to $(\calO/p\calO)^{\times}$. 
		This $l$ is divisible by $l_1$, which denotes the following
		\begin{equation}
			l_1 = \min\{l \geq 1\  \mid\  b^{l}\in \Z + p^{l'} \calO \text{ for some }l' \geq 1\}.
		\end{equation}
		Then denote $l_2$ to be 
		\begin{equation}
			l_2 = \max \{ l \geq 1 \mid  b^{l_1} \in \Z+p^{l}\calO\}.
		\end{equation}
		The statement is then clear for $c \leq l_2$. Now we will show that the statement is valid for $c+1$ by assuming it true for $c$. 
		Let $b^{n}   \in \Z + p^{c+1} \calO$.
		This implies that 
		$b^{n} \in \Z + p^{c+1} \calO \subseteq \Z + p^{c} \calO$.
		We know by our induction hypothesis that $n =  N m$ for some $m \in \mathbb{Z}$ where $N= l_1p^{l_2-c}$. 
		
		Let us assume that for some $a \in \Z \setminus p \Z$ and some $y \in \calO \setminus \Z$, we have
		$b^{N}= a + p^{c}y$.
		This implies that 
		\begin{equation}
			b^{Nm} \in a^{m} + m a^{m-1}p^{c}y + p^{2c} \calO \subseteq ma^{m-1} p^{c} y +  \left(  \Z + p^{2c}\calO \right). 
		\end{equation}
		Since $y \notin \Z$ and $p \nmid a$, the only way $b^{Nm} \in \Z + p^{c+1} \calO$ is if $p \mid m$. Hence, we prove that $Np \mid n$ and we are done.
	\end{proof}
	
	\begin{prop}\label{prop:suborder}
%	{\nihar This proposition needs to be a bit clearer about $M$. We are not sensitive about $\pm 1$.}
		For all distinct primes $p_1, \; p_2, \; p_3$ there exists $(l_i,j_i)_{1 \leq i \leq 3}$ such that we have the following. 
		
		Fix $c,d,r\in \N$ and any order $\calO\in \calC$ such that the coefficients $a_1,a_2$ of its polynomial (as in Proposition \ref{pr:proposition_polynom-units-orders}) satisfy
		the following system,
		$$\left\lbrace \begin{array}{ cccccl}
			a_1 &\equiv&0&\equiv& a_2-1 &\pmod{p_1^c}\\
			a_1-1 &\equiv&0&\equiv& a_2 &\pmod{ p_2^d}\\
			a_1-1 &\equiv&0&\equiv& a_2-1 &\pmod{ p_3^r}
		\end{array} \right.$$
		
		\noindent Then the units of the suborder $\calO_{c,d,r}^\times$ are given by %$L_{c,d,r}\subset \Z^2$ in the sense that 
		$$\calO_{c,d,r}^\times =\left\lbrace \pm b_1^m b_2^n \; \bigg\vert  (m,n)  \in L_{c,d,r} \right\rbrace$$
		where $b_1 = \alpha$, $b_2 = \alpha-a_1$ and $L_{c,d,r}$ denotes the solutions to the congruence equations {
			$$\begin{array}{ cccl}
				m+n &\equiv &0 &\pmod{ l_1 p_1^{\max( c-j_1,0)}},\\
				m-2n &\equiv &0 &\pmod{ l_2 p_2^{\max( d-j_2,0)}},\\
				n-2m &\equiv &0 &\pmod{ l_3 p_3^{\max( r-j_3,0)}}.
			\end{array}  $$}
	\end{prop}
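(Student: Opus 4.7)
The plan is to combine the three results already established: Proposition~\ref{pr:proposition_polynom-units-orders} identifies generators of $\calO^\times$, Lemma~\ref{le:local_behaviour} lets us analyse each prime separately, and Lemma~\ref{le:order_mod_prime_power} converts membership in $\Z + p^k\calO$ into a congruence on the exponent. By Proposition~\ref{pr:proposition_polynom-units-orders}, since $\calO\in\calC$, every element of $\calO^\times$ has the form $\pm b_1^m b_2^n$ for some $(m,n)\in\Z^2$, with $b_1=\alpha$ and $b_2=\alpha-a_1$. Because $\pm 1\in\Z$, the membership $\pm b_1^m b_2^n \in \calO_{c,d,r} = \Z + p_1^c p_2^d p_3^r\calO$ is equivalent to $b_1^m b_2^n \in \Z + p_1^c p_2^d p_3^r\calO$, which by Lemma~\ref{le:local_behaviour} splits into the three simultaneous conditions $b_1^m b_2^n \in \Z + p_i^{e_i}\calO$ with $(e_1,e_2,e_3)=(c,d,r)$.

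The heart of the argument is to collapse the two-variable expression $b_1^m b_2^n$ to a power of a \emph{single} unit modulo each $p_i^{e_i}\calO$, using the hypotheses on $a_1,a_2$ together with the fundamental identity $b_1\, b_2\, (\alpha-a_2)=1$. For the prime $p_1$, the conditions $a_1\equiv 0,\ a_2\equiv 1$ imply $b_2-b_1=-a_1\in p_1^c\calO$, so $b_1^m b_2^n \equiv b_1^{m+n} \pmod{p_1^c\calO}$. For the prime $p_2$, the conditions $a_1\equiv 1,\ a_2\equiv 0$ give $\alpha-a_2\equiv b_1$, hence the fundamental identity reduces to $b_1^2 b_2\equiv 1$, yielding $b_1^m b_2^n \equiv b_1^{m-2n} \pmod{p_2^d\calO}$. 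For the prime $p_3$, the conditions $a_1\equiv 1,\ a_2\equiv 1$ give $\alpha-a_2\equiv b_2$, so the identity reduces to $b_1 b_2^2\equiv 1$, yielding $b_1^m b_2^n \equiv b_2^{n-2m}\pmod{p_3^r\calO}$.

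With these reductions in hand, I would apply Lemma~\ref{le:order_mod_prime_power} three times: to $b_1$ with the prime $p_1$, to $b_1$ with the prime $p_2$, and to $b_2$ with the prime $p_3$ (noting that $b_1,b_2$ are non-torsional by Proposition~\ref{pr:proposition_polynom-units-orders}). This produces the constants $(l_i,j_i)_{1\leq i\leq 3}$ and transforms the three scalar conditions into the stated congruences on $m+n$, $m-2n$, and $n-2m$. The main point requiring care is verifying that the congruences in the middle paragraph really hold as identities in $\calO$ modulo the correct ideals: for $p_1$ this is immediate, and for $p_2,p_3$ it follows by substituting the given congruences on $a_1,a_2$ directly into $b_1 b_2(\alpha-a_2)=1$ and observing that the resulting error lies in $p_i^{e_i}\calO$. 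Beyond this bookkeeping I do not anticipate a genuine obstacle, as all the nontrivial structural work has already been carried out in the preceding lemmas and in the strong congruence hypotheses on $(a_1,a_2)$, which were chosen precisely to make this one-variable collapse happen.
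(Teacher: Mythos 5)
Your proposal is correct and follows essentially the same route as the paper's proof: reduce to $\calO^\times = \{\pm b_1^m b_2^n\}$ via \cref{pr:proposition_polynom-units-orders}, split the membership condition prime-by-prime via \cref{le:local_behaviour}, use the congruence hypotheses on $a_1,a_2$ together with $b_1 b_2(\alpha-a_2)=1$ to collapse $b_1^m b_2^n$ to a power of a single unit modulo each $p_i^{e_i}\calO$, and then apply \cref{le:order_mod_prime_power}. The paper phrases the collapse by setting $b_3 = \alpha - a_2$ and writing $b_1\equiv b_2$, $b_1\equiv b_3$, $b_2\equiv b_3$ modulo the respective ideals, but this is the same computation you carry out.
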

	
	\begin{proof}
		Denote $b_3 = \alpha - a_2$. We have $b_1b_2b_3 = 1$ by construction.  
		From \cref{pr:proposition_polynom-units-orders}, we know that $$\calO^\times = \{\pm~b_1^m b_2^n \mid (m,n) \in \Z^2\}.$$ 
		We want to calculate 
		\begin{align}
		\calO_{c,d,r}^{\times} & \subseteq \calO^{\times} \\
		 & = \{\pm~b_1^m b_2^n \mid \pm ~b_1^m b_2^n \in \Z + p_1^c p_2^d p_3^r \calO \} \\
		 & = \{\pm 1\} \times \{b_1^m b_2^n \mid b_1^m b_2^n \in \Z + p_1^c p_2^d p_3^r \calO \}.
		\end{align}
		
		We know from Lemma \ref{le:local_behaviour}
		that 
		\begin{align}
			& \ \ b_1^{m}b_2^{n}\in \Z + p_1^{c}p_2^{d}p_3^r\calO\\
			\Leftrightarrow 
			& \ \ b_1^{m}b_2^{n}\in 
			( \Z + p_1^{c}\calO
			)\cap( \Z + p_2^{d}\calO
			)\cap( \Z + p_3^r\calO
			)	.
		\end{align}
		
		Observe the local conditions on $a_1,a_2,a_3$ imply the following
		\begin{align}
			b_1\equiv b_2 \pmod{p_1^{c}\calO}\\
			b_1\equiv b_3 \pmod{p_2^{d}\calO}\\
			b_2\equiv b_3 \pmod{p_3^{r}\calO}.
		\end{align}
		Writing $b_3=b_1^{-1}b_2^{-1}$ and using Lemma \ref{le:order_mod_prime_power}, we get that 
		\begin{align}
			b_1^{m}b_2^{n} \in \Z + p_1^{c}\calO &\Leftrightarrow l_1 p_1^{\max( c-j_1,0 )} \mid (m+n)\\
			b_1^{m}b_2^{n} \in \Z + p_2^{d}\calO &\Leftrightarrow 
			l_2 p_2^{\max( d-j_2,0 )} \mid (m-2n)\\
			b_1^{m}b_2^{n} \in \Z + p_3^{r}\calO &\Leftrightarrow 
			l_3 p_3^{\max( r-j_3,0 )} \mid (n-2m).
		\end{align}
		
		{For $p_2$, due to $b_2\equiv(b_3b_1)^{-1}\equiv b_1^{-2}\pmod{ p_2^d\calO}$, we obtain $b_1^mb_2^n\equiv b_1^{m-2n}\pmod{p_2^d\calO}$. Then the condition we need is $b_1^{m-2n}\in \Z+p_2^d\calO $. Combined with \cref{le:local_behaviour}, we obtain the second congruence condition.
			Similarly for $p_3$, we get $b_1^{m}b_{2}^{n} \equiv b_{2}^{n-2m} \pmod{p_3^r\calO}$.
		}
	\end{proof}
	
	\begin{rem}
	From the geometric perspective, taking suborders is equivalent to taking Hecke neighbors of a periodic torus. The shapes of new tori are given by the shapes of units of suborders.
	\end{rem}

	\section{Density of a family of shapes}
	\label{ss:density}
In this section we set the triplet of primes $(p_1,p_2,p_3)=(2,3,5)$ in \cref{prop:suborder}.
We first calculate the sub-lattice $L_{c,d,r}$ for all $c,d,r \geq 2$.
In \cref{defin-Lcde}, we express $L_{c,d,r}$ as the image of a sub-lattice of $\Z^2$ under the action of some large diagonal matrix.
This expression motivates us to study, up to a fixed matrix action, the family of shapes of $\lbrace L_{c,d,r} \rbrace_{c,d,r \in \N}$ and prove their density in \cref{theorm_density}.
Finally, in the last section, we prove \cref{co:ds}. 
Namely, we prove that the family of shapes given by units of orders of the form $\calO_{c,d,r}=\Z + 2^c 3^d 5^r \calO$ where $\calO \in \lbrace \calO^{(a_1,a_2)}\rbrace_{a_1,a_2 \in \N}$ (Cf. \eqref{eq:denote_order_a_1a_2}) is dense.

	\subsection{The suborders matrix equations for $(2,3,5)$}
	In this section we set the triplet of primes $(p_1,p_2,p_3)=(2,3,5)$ in \cref{prop:suborder} and fix some values of $c,d,r \geq 2$.
	Let us try to evaluate the values of $l_1,l_2,l_3$ and $j_1,j_2,j_3$ using the proof of \cref{le:order_mod_prime_power}.
	
	When $p_1=2$, we get from the local conditions imposed in \cref{prop:suborder} that
	$$f(X) = X(X-a_1)(X-a_2) - 1 \equiv X^3-X^{2}-1 \pmod{2}.$$ This is an irreducible polynomial. We can then calculate that in $\mathbb{F}_2(X)/\langle X^{3}+X^{2}+ 1 \rangle$ the multiplicative order of $X$ is 7. One sees that in $\mathbb{Z}[X]$, $X^7 \equiv  2X + 1 \pmod{(f(X),4)}$ which means that $X^7$ is an integer modulo $2$ but is not an integer modulo $2^2$ and hence $j_1=1$.
	
	Similarly, for $p_2=3$ we conclude that 
	$$f(X)=X(X-a_1)(X-a_2) - 1 \equiv X^3-X^{2}-1 \pmod{3}.$$
	The smallest degree polynomial of the type $X^{n_1}-n_2$ for $n_1\in \mathbb{Z}_{\geq 1}, n_2 \in \mathbb{Z}$ that is divisible by $X^{3}-X^{2} -1$ modulo $3$ is $X^{8} - 1$. This means that $l_2=8$ and one can check that $j_2=1$ since $X^8-1$ is divisible by $3$ but not by $9$.
	
	Finally, for $p_3=5$ we get with the values of $a_1,a_2$ in \cref{prop:suborder} that
	$$X(X-a_1)(X-a_2) - 1 \equiv X^3-2X^{2} + X-1 \pmod{5}.$$ Then, we observe that the smallest power of $(X-1)$ that is an integer modulo $X^{3}-2X^{2}+X-1$ is $(X-1)^{24}$ which implies that 
	$l_3=1$. We can also check that $(X-1)^{24}-1$ is only once divisible by $5$ so $j_3=1$ again.

	So we managed to compute $(l_1,l_2,l_3) =(7,8,24)$ and $(j_1,j_2,j_3)=(1,1,1)$. All our computations were carried out in \texttt{sagemath}. Based on these calculations and \cref{prop:suborder}, let us denote the set
	\begin{equation}\label{eq:defi_of_L}
		L_{c,d,r}=
		\left\lbrace\begin{tabular}{c|ccl}
\multirow{3}*{$\begin{pmatrix}
    m \\
    n
\end{pmatrix} \in \Z^2$} 
& $m+n$  &$\equiv\,0$ &  $\pmod {7\cdot 2^{c-1}}$ \\ 
& $m-2n$ &$\equiv\,0$ &  $\pmod {8\cdot 3^{d-1}}$ \\ 
& $2m-n$ &$\equiv\,0$ &  $\pmod {24\cdot 5^{r-1}}$ 
		\end{tabular}\right\rbrace
	\end{equation}

	\begin{lem}\label{defin-Lcde}
		Denote by 
		$$g = 8 
		\left( 
		\begin{array}{rr}
			1 & 1\\
			0 & -1  
		\end{array}\right)
		\begin{pmatrix}
			3 & 0 \\
			0 & 1
		\end{pmatrix}
		\left( \begin{array}{rr}
			-1 & 1 \\
			2 & -1
		\end{array}\right).$$
		Then for every $c\geq 4$ and $d\geq 2$ and $r\geq 2$, the sub-lattice $L_{c,d,r}$ as defined in \cref{eq:defi_of_L} satisfies 
		$$L_{c,d,r} = g
		\begin{pmatrix}
			3^{d-2} & 0 \\
			0 & 5^{r-1}
		\end{pmatrix} 
		\bigg\lbrace 
		\binom{x}{y} \in \Z^2
		\; \bigg\vert \;
		3^{d-2} x - 5^{r-1} y \equiv 0 \pmod{ 7 \cdot 2^{c-4}}
		\bigg\rbrace.$$
	\end{lem}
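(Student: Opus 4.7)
The plan is to prove the equality of sets by direct computation. First I would expand the matrix product $g\,\mathrm{diag}(3^{d-2},5^{r-1})$ explicitly to obtain linear formulas for $m$ and $n$ in terms of $x$ and $y$, and then verify both inclusions by examining the three linear combinations $m+n$, $m-2n$, and $2m-n$ in these new variables.

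For the inclusion $\supseteq$, a direct calculation gives $g = \begin{pmatrix}-8 & 16 \\ -16 & 8\end{pmatrix}$, hence
\begin{align*}
m &= -8\cdot 3^{d-2}\,x + 16\cdot 5^{r-1}\,y,\\
n &= -16\cdot 3^{d-2}\,x + 8\cdot 5^{r-1}\,y.
\end{align*}
The identities $m - 2n = 8\cdot 3^{d-1}\,x$ and $2m - n = 24\cdot 5^{r-1}\,y$ then hold for all $(x,y)\in\Z^2$, so the second and third congruences in \eqref{eq:defi_of_L} are automatic. For the first, one computes $m + n = -24\bigl(3^{d-2}x - 5^{r-1}y\bigr)$; writing $24 = 8 \cdot 3$ and using $\gcd(7\cdot 2^{c-4},3) = 1$, which is valid because $c\geq 4$, the divisibility $7\cdot 2^{c-1}\mid m+n$ is equivalent to $7\cdot 2^{c-4}\mid 3^{d-2}x - 5^{r-1}y$, which is exactly the stated constraint on $(x,y)$.

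For the reverse inclusion, given $\binom{m}{n}\in L_{c,d,r}$, I would set $x := (m-2n)/(8\cdot 3^{d-1})$ and $y := (2m-n)/(24\cdot 5^{r-1})$; these are integers precisely because of the second and third congruences in \eqref{eq:defi_of_L}. Since $\det g = 192 \neq 0$, the linear system $g\,\mathrm{diag}(3^{d-2},5^{r-1})\binom{x'}{y'} = \binom{m}{n}$ has a unique solution over $\Q$, and a direct substitution shows this solution is the integer pair $(x,y)$ just defined. The congruence $3^{d-2}x - 5^{r-1}y \equiv 0 \pmod{7\cdot 2^{c-4}}$ then follows from the first congruence of \eqref{eq:defi_of_L} via the identity $m+n = -24\bigl(3^{d-2}x-5^{r-1}y\bigr)$, exactly as in the previous paragraph.

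I do not expect a serious obstacle: once the factorization in the statement is expanded, the whole argument reduces to linear bookkeeping. The one genuinely sensitive point is the role of the hypothesis $c\geq 4$, which is used exactly once, to absorb the factor $2^{3}=8$ hidden inside $24 = 8\cdot 3$; the hypotheses $d\geq 2$ and $r\geq 2$ merely guarantee that $3^{d-2}$ and $5^{r-1}$ are non-negative integer powers. The real content lies in the shape of $g$, which has been engineered so that the rows of $g\,\mathrm{diag}(3^{d-2},5^{r-1})$ force two of the three congruences to hold trivially, leaving only the single $2$-adic condition on $(x,y)$.
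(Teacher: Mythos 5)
Your proposal is correct, and the computation checks out: expanding $g = \begin{pmatrix} -8 & 16 \\ -16 & 8 \end{pmatrix}$, the combinations $m-2n = 8\cdot 3^{d-1}x$ and $2m-n = 24\cdot 5^{r-1}y$ do make the second and third congruences of \eqref{eq:defi_of_L} automatic, and $m+n = -24\,(3^{d-2}x - 5^{r-1}y)$ reduces the first congruence to the stated one modulo $7\cdot 2^{c-4}$ exactly as you say, with $c\geq 4$ used to absorb the factor $2^3$ in $24$. However, your route is genuinely different from the paper's. The paper does not start from the given expression for $g$ and verify it; instead it computes the Smith normal form of the coefficient matrix $\left(\begin{smallmatrix} 1 & 1 \\ 1 & -2 \\ 2 & -1 \end{smallmatrix}\right)$, writing it as $U^{-1}\left(\begin{smallmatrix} 1 & 0 \\ 0 & 3 \\ 0 & 0 \end{smallmatrix}\right)V^{-1}$ with $U\in\SL(3,\Z)$, $V\in\SL(2,\Z)$. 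This decouples the $3\times 2$ system into a rank-two part (which produces the unimodular and diagonal factors appearing in $g$) and a single hyperplane equation in $\Z^3$ (which produces the congruence modulo $7\cdot 2^{c-4}$), so the factorization of $g$ is derived rather than merely checked. Your direct expansion is more elementary and avoids the Smith normal form machinery, at the cost of treating $g$ as a black box whose origin is unexplained; the paper's argument is longer but exhibits where $g$ comes from. One small point to tighten in your reverse inclusion: you should actually perform the substitution you gesture at, namely verify that $g\,\diag(3^{d-2},5^{r-1})\binom{x}{y}=\binom{m}{n}$ with $x=(m-2n)/(8\cdot 3^{d-1})$, $y=(2m-n)/(24\cdot 5^{r-1})$, rather than appeal only to $\det g\neq 0$; that computation is routine but should appear explicitly.
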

	\begin{proof}
		By \cref{eq:defi_of_L}, $L_{c,d,r}$ is the set of integer vectors $(m,n) \in \Z^2$ that solve a rank two equation in $\Z^3$ i.e. such that 
		$$ 
		\left(\begin{array}{rr}
			1 & 1 \\
			1 & -2 \\
			2 & -1
		\end{array}\right)
		\binom{m}{n} \in 
		\left(\begin{array}{ccc}
			7 \cdot 2^{c-1} & & (0) \\
			& 8 \cdot 3^{d-1} & \\
			(0) & & 24 \cdot 5^{r-1}
		\end{array}\right) \Z^3 .$$
		We compute the Smith normal form on the $3 \times 2$ matrix on the left and denote by 
		$$ 
		U = \left( 
		\begin{array}{rrr}
			0 &  -1 & 1\\
			0 & 2 &  -1\\
			1 & 1 &  -1
		\end{array}\right) , \quad
		M = 
		\left(
		\begin{array}{rr}
			1 & 1 \\
			2 & -1 \\
			1 & -2
		\end{array}\right), \quad
		V = \left( 
		\begin{array}{rr}
			1 & 1\\
			0 & -1  
		\end{array}\right).
		$$ 
		Then $U \in \SL(3,\Z)$ and $V \in \SL(2,\Z)$ and
		$     M = U^{-1} \left(
		\begin{array}{rr}
			1 & 0 \\
			0 & 3 \\
			0 & 0
		\end{array} \right) V^{-1}.
		$
		Now, $V^{-1} L_{c,d,r}$ is the set of $(m,n)\in \Z^2$ such that there exists $(x,y,z) \in \Z^3$ such that
		\begin{equation}\label{eq:nicer-Lcde}
			\left(
			\begin{array}{rr}
				1 & 0 \\
				0 & 3 \\
				0 & 0
			\end{array} \right) 
			\binom{m}{n} =
			U  \left(\begin{array}{r}
				7 \cdot 2^{c-1} \; x  \\
				8 \cdot 3^{d-1} \; y \\
				24 \cdot 5^{r-1}\; z
			\end{array}\right)  . 
		\end{equation}
		The first two lines of the matrix equation \cref{eq:nicer-Lcde} yields the following rank $2$ linear equation
		\begin{equation}\label{eq:Lcde-2square}
			\left( 
			\begin{array}{rr}
				1 & 0\\
				0 & 3
			\end{array}\right)
			\binom{m}{n} =
			\left( 
			\begin{array}{rr}
				-1 & 1\\
				2 &  -1
			\end{array}\right)
			\binom{8 \cdot 3^{d-1} \; y}{24 \cdot 5^{r-1}\; z}. 
		\end{equation}
		Hence using that $d \geq 2$, we deduce that 
		$V^{-1} L_{c,d,r} \subset 
		8 \begin{pmatrix}
			3 & 0 \\
			0 & 1
		\end{pmatrix} 
		\left( 
		\begin{array}{rr}
			-1 & 1\\
			2 &  -1
		\end{array}\right)
		\begin{pmatrix}
			3^{d-2} & 0\\
			0 &  5^{r-1}
		\end{pmatrix} \Z^2.
		$ 
		
		The last line of the matrix equation \cref{eq:nicer-Lcde} is a hyperplane equation in $\Z^3$. 
		It will induce a sub-lattice in $\Z^2$ which will be mapped to $L_{c,d,r}$ by an element in $\GL(2,\R)$.
		By the matrix equation, $(x,y,z)\in \Z^3$ satisfy
		\begin{equation}\label{eq:Lcde-hyperplane}
			7 \cdot 2^{c-1} \; x + 8 \cdot 3^{d-1} \; y - 24 \cdot 5^{r-1}\; z = 0.
		\end{equation}
		Equivalently,
		$$ -7 \cdot 2^{c-4} \; x = 3^{d-1} \; y - 3 \cdot 5^{r-1}\; z. $$
		Since $c \geq 4$ and using that $3$ and $7 \cdot 2^{c-4}$ are coprime and $d \geq 2$, the integers $(y,z)\in \Z^2$ solve the congruence equation
		$ 3^{d-2} \; y -  5^{r-1}\; z \equiv 0 \; \pmod{ 7 \cdot 2^{c-4}}.$
		
		Finally, combining the hyperplane relation with the rank $2$ part, we deduce that
		$$ L_{c,d,r} = 8 V \begin{pmatrix}
			3 & 0 \\
			0 & 1
		\end{pmatrix} 
		\left( 
		\begin{array}{rr}
			-1 & 1\\
			2 &  -1
		\end{array}\right)
		\begin{pmatrix}
			3^{d-2} & 0\\
			0 &  5^{r-1}
		\end{pmatrix} \bigg\lbrace 
		\binom{y}{z}
		\; \bigg\vert \;
		3^{d-2} y - 5^{r-1} z \equiv 0 \pmod{ 7 \cdot 2^{c-4}}
		\bigg\rbrace. $$
	\end{proof}
	
\subsection{Density of shapes for the solutions of the $(2,3,5)-$equations}

	For all $c,d,r\in \N$ with $c\geq 4$ and $d\geq 2$, denote by 
	$$\Lambda_{c,d,r} = \begin{pmatrix}
		3^{d-2} & 0\\
		0 &  5^{r-1}
	\end{pmatrix} \bigg\lbrace 
	\binom{y}{z}
	\; \bigg\vert \;
	3^{d-2} y - 5^{r-1} z \equiv 0 \pmod{ 7 \cdot 2^{c-4}}
	\bigg\rbrace.$$
	Note that
	$L_{c,d,r} = g \Lambda_{c,d,r}$, where $g$ is defined in Lemma \ref{defin-Lcde}.
 The sets $L_{c,d,r}$ and $\Lambda_{c,d,r}$ are auxilary lattices in $\R^2$ that upto a linear transformation are the set of periods of a periodic torus, as explained in the proof of \cref{co:ds} in \cref{ss:proof_of_density}. 
 For now, we prove that these lattices are dense.
	\begin{prop}\label{theorm_density}
		The collection of points $  \lbrace   [\Lambda_{c,d,r}] \rbrace_{c,d,r\in \N} \subset \SL(2,\R)/\SL(2,\Z)$ is dense.
	\end{prop}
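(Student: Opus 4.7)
Writing $A = 3^{d-2}$, $B = 5^{r-1}$, and $N = 7 \cdot 2^{c-4}$, the explicit description in \cref{defin-Lcde} gives that $\Lambda_{c,d,r}$ has the upper-triangular basis $(AN, 0), (Aq, B)$, where $q \in [0, N)$ is the unique integer with $A q \equiv B \pmod{N}$. Hence the shape $[\Lambda_{c,d,r}]$ corresponds in $\mathbb{H}$ to the point
\[\tau_{c,d,r} = \frac{q}{N} + i\,\frac{B}{AN}.\]
Moreover the relation $A q = B - mN$ gives the key arithmetic identity
$\mathrm{Re}(\tau_{c,d,r}) \equiv \mathrm{Im}(\tau_{c,d,r}) - m/A \pmod 1$
for a unique integer $m$ coprime to $3$ (since $3 \mid A$ for $d \geq 3$ while $3 \nmid B$). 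The plan is to show that the closure of the shapes is $A$-invariant and contains an arc of a closed horocycle, and then to conclude density via the ``Banana trick'' of \cite{kleinbock1996bounded}.

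For $A$-invariance, $a_{t_0}$ acts on $\mathbb{H}$ by $\tau \mapsto e^{2 t_0} \tau$, so I would seek $(c',d',r')$ with $\tau_{c',d',r'}$ close to $a_{t_0} \tau_{c,d,r}$. Matching the imaginary part reduces to the Diophantine approximation $(r'-r)\log 5 - (d'-d)\log 3 - (c'-c)\log 2 \approx 2 t_0$, solvable to arbitrary precision by the $\Q$-linear independence of $\log 2, \log 3, \log 5$. Matching the real part amounts to approximating the target $e^{2 t_0}\,m/A$ by $m'/A'$: taking $d'$ large makes the available fractions $m'/3^{d'-2}$ dense in $\R$, and the corresponding congruence $5^{r'-1} \equiv 7 m' \cdot 2^{c'-4} \pmod{A'}$ always has solutions in $(c', r')$ because $2$ is a primitive root modulo every power of $3$, cutting out a full-rank sublattice of $\Z^2$.

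For the horospherical arc, I would fix $y_0 > 0$ and take $d$ large. For each $m \in \Z$ coprime to $3$, the above congruence cuts out a shifted full-rank sublattice $L_{d,m} \subset \Z^2$, and along $L_{d,m}$ the imaginary part $5^{r-1}/(3^{d-2} \cdot 7 \cdot 2^{c-4})$ is dense in $\R_{>0}$ by the irrationality of $\log 5 / \log 2$ together with the full-rankness of $L_{d,m}$. Choosing $(c, r) \in L_{d,m}$ with imaginary part close to $y_0$, the key identity forces $\mathrm{Re}(\tau_{c,d,r}) \approx y_0 - m/A$; as $m$ varies over integers coprime to $3$ with $d$ large, the offset $m/A$ is dense in $\R$, so the real parts accumulate on a dense subset of $\R/\Z$ at the fixed height $y_0$, furnishing a full closed horocycle in the closure.

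The closure is then $A$-invariant and contains a closed horocycle, so by $A$-invariance it contains closed horocycles at every height, and together these sweep out the full $2$-dimensional $P$-orbit of the identity ($P = AU$). Since this $P$-orbit is already dense in $\SL(2,\R)/\SL(2,\Z)$ (the $\SL(2,\Z)$-orbit of $\infty \in \mathbb{P}^1(\R)$ being dense), the Banana trick \cite{kleinbock1996bounded} then yields density in the full homogeneous space. The main technical obstacle is the simultaneous control of $\mathrm{Re}(\tau_{c,d,r})$ and $\mathrm{Im}(\tau_{c,d,r})$ under the rigid congruence constraint, which forces the combined use of the transcendence of $\log 2, \log 3, \log 5$ and the primitive-root property of $2$ modulo powers of $3$.
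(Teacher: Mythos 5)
Your overall plan---establish $A$-invariance of the closure, exhibit a horospherical piece, and conclude via the Banana trick---is the same as the paper's. The arithmetic identity $\mathrm{Re}(\tau_{c,d,r}) = \mathrm{Im}(\tau_{c,d,r}) - m/A$ is a neat way to encode the congruence data, and the full-rank-sublattice argument using that $2$ and $5$ are primitive roots modulo powers of $3$ is a sound alternative to the paper's bookkeeping via the residue set $S_c$.

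There is, however, an error in the diagonal-invariance step: the formula ``$a_{t_0}$ acts on $\mathbb{H}$ by $\tau\mapsto e^{2t_0}\tau$'' is not the action on shape coordinates induced by left multiplication of lattices by $a_{t_0}$. That formula is the M\"obius action of $\mathrm{diag}(e^{t_0},e^{-t_0})$ on $\mathbb{H}$, which is a different thing. Since $\Lambda_{c,d,r}$ has the upper-triangular basis $(AN,0),(Aq,B)$, applying $a_{t_0}=\mathrm{diag}(e^{t_0/2},e^{-t_0/2})$ yields the basis $(e^{t_0/2}AN,0),(e^{t_0/2}Aq,e^{-t_0/2}B)$, whose shape coordinate is $q/N + ie^{-t_0}B/(AN)$; hence the $A$-action rescales only $\mathrm{Im}(\tau)$ and fixes $\mathrm{Re}(\tau)$. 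Your matching conditions should be $\mathrm{Im}(\tau')\approx e^{-t_0}\mathrm{Im}(\tau)$ together with $\mathrm{Re}(\tau')\equiv\mathrm{Re}(\tau)\pmod 1$, not $\tau'\approx e^{2t_0}\tau$, and in particular the correct target for the offset is $m'/A'-m/A\approx(e^{-t_0}-1)\mathrm{Im}(\tau)\pmod 1$, not $m'/A'\approx e^{2t_0}m/A$. The paper avoids the issue entirely: it fixes $c$ and takes $d_n,r_n$ in the residue classes modulo the orders $s_1,s_2$ of $3,5$ in $(\Z/7\cdot 2^{c-4}\Z)^\times$, so the congruence sublattice is literally unchanged and only the ratio $3^{d_n-2}/5^{r_n-1}$ needs to be steered, using irrationality of $\log 3/\log 5$.

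The good news is that your horospherical-arc step is correct and, if carried out for every height $y_0>0$ rather than a single one, already produces the full $P$-orbit of $\Z^2$ inside the closure; density then follows because that orbit is dense, and the Banana trick you invoke is in fact superfluous. So the proposal is salvageable: either run the horospherical argument at all heights and drop the $A$-invariance step, or correct the $A$-action formula and adjust the matching conditions accordingly.
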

	\begin{proof}
		
		Let $\Omega\subseteq \SL_2(\mathbb{R})/\SL_2(\Z) $ be the closure of $\{[\Lambda_{c,d,r}],\ c,d,r\in\N \}$. 
		We will first prove that $\Omega$ is invariant under the diagonal flow $\{a(t)\}_{t\in \R}$,
		then we prove it contains the horosperical piece $u([0,1])\SL_2(\Z)$, where 
		$$a(t)=\begin{pmatrix}
			e^{t/2} & 0 \\ 0 & e^{-t/2}
		\end{pmatrix}
		\,,\,\,u(s)=\begin{pmatrix}
			1 & s \\ 0 & 1
		\end{pmatrix}.$$
		Then we can apply Margulis' ``banana trick''
		to conclude the density of $\Omega$. (See for example \cite[Proposition 2.2.1 and Proposition 2.4.8]{kleinbock1996bounded}).

\textbf{Diagonal invariance:}
Fix $t>0$. 
 We want to show that $\Omega$ is invariant under $a(t)$. It is sufficient to show that for every $[\Lambda_{c,d,r}]$ where $c,d,r\in \N$, then $a(t) [\Lambda_{c,d,r}] \in \Omega$. By a diagonalization argument, it suffices to prove that there exist $(c_n,d_n,r_n)$ such that $[\Lambda_{c_n,d_n,r_n}]\rightarrow a(t)[\Lambda_{c,d,r}]$. 
Note that 
	$$a(t)\Lambda_{c,d,r} = \begin{pmatrix}
		3^{d-2}e^{t/2} & 0\\
		0 &  5^{r-1}e^{-t/2}
	\end{pmatrix} \bigg\lbrace 
	\binom{y}{z}
	\; \bigg\vert \;
	3^{d-2} y - 5^{r-1} z \equiv 0 \pmod{ 7 \cdot 2^{c-4}}
	\bigg\rbrace.$$
We will choose $c_n,d_n,r_n$ such that the $2-$lattice solving the hyperplane equation $3^{d_n-2} y - 5^{r_n-1} z \equiv 0 \pmod{ 7 \cdot 2^{c_n-4}}$
remains satisfied and such that we also have the following convergence in $\mathrm{PGL}(2,\R)$: 
% i.e. by ensuring the second coefficient on the diagonal equals $1$ 
\begin{eqnarray}\label{eq-diagonal-convergence}
\begin{pmatrix}
 3^{d_n -2} & 0 \\
0 & 5^{r_n-1}
\end{pmatrix} \sim \begin{pmatrix}
 3^{d_n -2}/5^{r_n-1} & 0 \\
0 & 1 
\end{pmatrix}
&\xrightarrow[\; n \rightarrow +\infty \;]{} 
\begin{pmatrix}
e^{t}3^{d -2}/5^{r-1} & 0 \\
0 & 1 
\end{pmatrix} \sim
\begin{pmatrix}
3^{d -2}e^{t/2} & 0 \\
0 & 5^{r - 1}e^{-t/2}
\end{pmatrix}.
\end{eqnarray}

Let $c_n = c$ for all $n\geq 1$. 
Denote by $s_1$ (resp. $s_2$) the order of $3$ (resp. $5$) in $(\Z/7 \cdot 2^{c-4}\Z)^\times$.
To preserve the hyperplane equation, $(d_n,r_n)$ must satisfy the following conditions. 
	$$\left\lbrace
	\begin{array}{cr}
	3^{d_n} \equiv 3^d &\pmod{ 7 \cdot 2^{c-4}}\\
	5^{r_n} \equiv 5^{r} &\pmod{ 7 \cdot 2^{c-4} }
	\end{array}
	\right. \Longleftrightarrow 
	\left\lbrace
	\begin{array}{cr}
	d_n \equiv d &\pmod{s_1}\\
	r_n \equiv r &\pmod{s_2}
	\end{array}
	\right. .$$		
Denote by $p_n = (d_n -d)/s_1$ and $q_n = (r_n-r)/s_2$.
Then by applying logarithm on the convergence \eqref{eq-diagonal-convergence} we deduce that the positive integers $(p_n,q_n)$ must equivalently satisfy the following convergence.
\begin{eqnarray}\label{eq-rational-approx}
p_n \cdot s_1\log(3) - q_n \cdot s_2 \log(5) - t \xrightarrow[\; n \rightarrow +\infty \;]{}  0.
\end{eqnarray}

Since $s_1$ and $s_2$ are positive integers, the ratio of $s_1 \log(3)$ and $s_2 \log(5)$ is irrationally.
Hence every forward orbit of the translation $s_1 \log(3)$ in $\R/( s_2 \log(5)\Z) $ is dense. In particular, as $p_n,q_n$ vary in $\Z_{
\geq 0 }$, the left-hand side in \eqref{eq-rational-approx} starting from $-t$ visits infinitely often any neighbourhood of $0$.
Therefore, we may pick a sequence of positive integers $(p_n,q_n)$ with $p_n, q_n \rightarrow + \infty$, satisfying the convergence \eqref{eq-rational-approx}.
Then $d_n = d + s_1 p_n$ and $r_n = r+s_2  q_n$ are positive integers and we have constructed a sequence $\big([\Lambda_{c,d_n,r_n}]\big)_{n\geq 1}$ that converges towards $a(t)[\Lambda_{c,d,r}]$.

\textbf{Horospherical piece:}
For every integer $c\geq 3$, denote by $S_c$ the (finite) set of representative in $[0,7\cdot 2^c]\cap \N$ of the multiplicative group generated by $5$ in $\big(\Z /(7 \cdot 2^c)\Z\big)^\times$, namely,	
$$S_c = \Big\lbrace f \in [0,7 \cdot 2^c]\cap \N \; \Big\vert \; \exists r \in \N \; \text{such that} \; 5^r \equiv f \pmod{7 \cdot 2^c} \Big\rbrace. $$ 		
The first step is to prove that as $c \rightarrow +\infty$ the set $\frac{S_c}{7 \cdot 2^c}$ becomes dense in $[0,1]$.

Then we prove that for every $c\geq 3 \in \N$, 
\begin{eqnarray}\label{eq-horos-inclusion}
\begin{pmatrix}
1 & \frac{S_c}{7 \cdot 2^c} \\
0 & 1
\end{pmatrix} \Z^2 \subset \Omega.
\end{eqnarray}
 
By combining the two steps in a diagonal argument, we can deduce that $\Omega$ contains the horospherical piece $u([0,1])\Z^2$.

Let us first calculate the cardinality of $S_c$
i.e. the order of $5$ in the multiplicative group $\big(\Z / 7 \cdot 2^c\Z\big)^\times$.
By induction, starting from the fact that $5^{2^1}-1 = 2^3 \cdot 3$ and using that for every $k\geq 1$ 
$$5^{(2^k)} - 1 = \Big( 5^{(2^{k-1})}-1 \Big) \Big(5^{(2^{k-1})}+1 \Big),$$ 
we deduce that for every $c\geq 3,$ the order of $5$ in $(\Z/2^c\Z)^\times$ is $2^{c-2}$.
Furthermore $5$ is a generator of $(\Z/7\Z)^\times$, it has order $2 \cdot 3$.
Therefore, $S_c$ is a set of $3 \cdot 2^{c-2}$ elements.
When $r$ is even (resp. odd), then $5^r\equiv 1 \pmod{2^3}$ and $5^r \equiv 1,2,4 \pmod 7 $ (resp. $\equiv 5 \pmod{2^3}$ and $\equiv  3,5,6  \pmod 7 $ ). 
By definition, any element in $S_c$ will satisfy either the even system of congruence equations, or the odd one. 
By the Chinese remainder Lemma, there are $3 \cdot 2^{c-3}$ integer solutions to the even system of congruence in $(0, 7 \cdot 2^c)$, likewise for the odd one. 
Consequently the two system of congruence describe all the elements in $S_c$.
Indeed, by explicit computation, for every integer $c\geq 3$
$$S_c = \big\lbrace  7 \cdot 2^3 k + j \; \big\vert \; j\in \lbrace 1,5,9,13,25,45 \rbrace \text{ and } k\in \Z_+ \cap [0,2^{c-3}-1]  \big\rbrace.$$
Hence $\min S_c = 1$ and $\max S_c = 7 \cdot 2^c - 11$ and the gaps between consecutive elements are at most $20$.
Consequently, $S_c/(7 \cdot 2^c)$ is dense in $[0,1]$ as $c$ goes to infinity.

Now for the second step of the proof, we fix and integer $c\geq 3$ and an element $f \in S_c$.
Let us denote by $s_1$ (resp. $s_2$) the order of $3$ (resp. $5$) in the multiplicative group $\big(\Z /7\cdot 2^c\Z\big)^\times$.
We want to find a sequence of positive integers $(d_n,r_n)$ such that
$\big([\Lambda_{c,d_n,r_n}]\big)_{n\geq 1}$ converges towards $u\big( \frac{f}{7 \cdot 2^c} \big)\Z^2$. 
Denote by $r_f$ and integer such that $5^{r_f} \equiv f \pmod{7 \cdot 2^c}$. 
Assume $d_n,r_n$ satisfy the following equivalent conditions.
	$$\left\lbrace
	\begin{array}{cr}
	3^{d_n} \equiv 1 &\pmod{ 7 \cdot 2^{c-4}}\\
	5^{r_n} \equiv f &\pmod{ 7 \cdot 2^{c-4} }
	\end{array}
	\right. \Longleftrightarrow 
	\left\lbrace
	\begin{array}{lr}
	d_n \equiv 0 &\pmod{s_1}\\
	r_n \equiv r_f &\pmod{s_2}
	\end{array}
	\right. .$$
Then $ 3^{d_n} y - 5^{r_n}z \equiv y - f z \pmod{7 \cdot 2^c}$ for all $(y,z) \in \Z^2$. 
Hence the solutions to the hyperplane equation for $\Lambda_{c,d_n,r_n}$, noting that $7 \cdot 2^c \vert y - fz$ if and only if $y \in 7 \cdot 2^c \Z + fz $ can be written as follows.  
$$\bigg\lbrace 
	\binom{y}{z}
	\; \bigg\vert \;
	3^{d_n} y - 5^{r_n} z \equiv y - f z \equiv 0 \pmod{ 7 \cdot 2^{c-4}}
	\bigg\rbrace = 
\begin{pmatrix}
7 \cdot 2^c & f \\
0 & 1	
\end{pmatrix}	\Z^2. $$	
As a consequence, 
$$\Lambda_{c,d_n,r_n} =  
\begin{pmatrix}
3^{d_n} & 0 \\
0 & 5^{r_n}
\end{pmatrix}
\begin{pmatrix}
7 \cdot 2^c & f \\
0 & 1	
\end{pmatrix}	\Z^2 = 
7 \cdot 2^c \cdot 3^{d_n}
\begin{pmatrix}
1 & f/(7 \cdot 2^c) \\
0 & \frac{5^{r_n}}{7 \cdot 2^c \cdot 3^{d_n}}
\end{pmatrix}	\Z^2.$$
To end the proof, it is sufficient to choose $r_n \in r_f+s_2\N$ and $d_n \in s_1 \N$ such that the following convergence holds.
\begin{eqnarray}
\frac{5^{r_n}}{7 \cdot 2^c \cdot 3^{d_n}} \xrightarrow[\; n \rightarrow + \infty]{} 1.
\end{eqnarray}
Set $p_n = d_n/s_1$ and $q_n = (r_n - r_f)/s_2$. Passing the above convergence to the logarithm, we deduce the equivalent convergence in $(p_n,q_n)$
$$ r_f \log (5) + q_n \cdot s_2 \log (5) - p_n \cdot s_1 \log(3) - \log(7 \cdot 2^c)  \xrightarrow[\; n \rightarrow + \infty]{} 0. $$
Similarly as for the diagonal invariance, the forward orbit starting at $r_f \log(5) - \log(7 \cdot 2^c)$ for the translation $s_2 \log(5)$ in $\R / s_1\log(3)\Z$ is dense, so there exists $p_n, q_n \rightarrow + \infty$ such that the above convergence is true.
	\end{proof}

\subsection{Density of shapes for periodic tori}
\label{ss:proof_of_density}

\begin{proof}[Proof of \cref{co:ds}]
For each $N\geq 10\in \N$, we choose a pair of integers $(a_{1,N},a_{2,N})$ such that $a_{1,N},a_{2,N}-a_{1,N}>C$ for $C$ from \cref{le:where_is_shape} and 
		$$\left\lbrace \begin{array}{ cccccl}
			a_{1,N} &\equiv&0&\equiv& a_{2,N}-1 &\pmod{ 2^N}\\
			a_{1,N}-1 &\equiv&0&\equiv& a_{2,N} &\pmod{ 3^N}\\
			a_{1,N}-1 &\equiv&0&\equiv& a_{2,N}-1 &\pmod{ 5^N}
		\end{array} \right.$$

Recall that $b_1=\alpha$ and $b_2=\alpha-a_1$. Applying \cref{le:where_is_shape} and enlarging $C$ if necessary, we know that up to the same scaling $\psi(b_1)=v_1$, $\psi(b_2)=v_2$ with $v_1,v_2$ given in \cref{le:where_is_shape}. Moreover, the matrix $g_N$ with $(\psi(b_1),\psi(b_2))=(u_1,u_2)g_N$ and its inverse $g_N^{-1}$ is bounded in $\GL_2(\R)^+$, where $(u_1,u_2)$ is a fixed oriented basis of $\frak a$ from its identification with $\R^2$. Therefore the lattice is represented by $[\Lambda(\calO^{(a_{1,N},a_{2,N})})]=[g_N\Z^2]$.

From \cref{prop:suborder}, we know that for $c,d,r\leq N$
\[ \bigg[\Lambda\Big(\calO^{(a_{1,N},a_{2,N})}_{c,d,r}\Big)\bigg]=\bigg[\Big\{m\psi(b_1)+n\psi(b_2)\; \Big\vert \begin{pmatrix}
    m \\ n
\end{pmatrix}\in L_{c,d,r} \Big\}\bigg]=[g_NL_{c,d,r}]. \]
From \cref{defin-Lcde}, we further obtain that
\[[\Lambda(\calO^{(a_{1,N},a_{2,N})}_{c,d,r})]=[g_Ng\Lambda_{c,d,r}]=g_Ng[\Lambda_{c,d,r}] , \]
 where the action of $g_Ng$ is through $\mathrm{PGL(2,\R)}$.

Due the uniform boundedness of $g_Ng$ for all $N$, we are able to choose a subsequence of $\{g_N g \}_{N \geq 10}$ that converges to some fixed $g_0\in \SL_2(\R) $. Therefore, by \cref{theorm_density}, the set of shapes
\[ \bigcup_{N\in\N}\{[\Lambda(\calO^{(a_{1,N},a_{2,N})}_{c,d,r})],\ c,d,r\leq N \}%=\bigcup_{N\in\N}\{[g_Ng\Lambda_{c,d,r}],\ c,d,r\leq N \}
=\bigcup _{N\in\N}\{g_Ng[\Lambda_{c,d,r}],\  c,d,r\leq N \}  \]
is also dense in $\SL(2,\R)/\SL(2,\Z)$. 
\end{proof}

\subsection{Density of shapes of positive units}
\label{ss:positive_units}

As mentioned in \cref{re:order positive}-\ref{reit:positive_units}, the shapes of periodic tori of $A$-orbits in $\SL(3,\R)/\SL(3,\Z)$
are the set 
$\{[\psi(\calO^{\times, +} ) ]\;|{\calO \text{ a totally real cubic order}}\}$. One can observe without much difficulty that for the totally positive units, in \cref{pr:proposition_polynom-units-orders} we would get 	
		$$\mathcal{O}^{\times,+} = \langle \alpha, (\alpha - a_1)^2 \rangle.$$
With this, we get that the totally positive units of the order $\calO_{c,d,r}$, in the notation of \cref{prop:suborder}, is given by 

$$\calO_{c,d,r}^{\times,+} =\left\lbrace b_1^m b_2^{2n} \; \bigg\vert  (m,n)  \in L_{c,d,r}' \right\rbrace,$$
where $L_{c,d,r}' \subset \Z^2$ is the lattice $(m,n) \in \Z^2$ satisfying the congruence conditions
$$\begin{array}{ cccl}
	m+2n &\equiv &0 &\pmod{ l_1 p_1^{\max( c-j_1,0)}},\\
	m-4n &\equiv &0 &\pmod{ l_2 p_2^{\max( d-j_2,0)}},\\
	2m-2n &\equiv &0 &\pmod{ l_3 p_3^{\max( r-j_3,0)}}.
\end{array}  $$

This leads to the following coincidence.
\begin{lem}
Let $\calO_{c,d,r}$ be as defined in \cref{ss:suborders-matrix-eq} with $(p_1,p_2,p_3)=(2,3,5)$.
Then the following shape points are equal in $\SL(2,\R)/\SL(2,\Z)$.
$$[\psi(\calO_{c,d,r}^{\times})]=[\psi(\calO_{c,d,r}^{\times, +} ) ].$$
\end{lem}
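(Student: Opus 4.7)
The plan is to establish the stronger statement that the two lattices $\psi(\calO_{c,d,r}^{\times})$ and $\psi(\calO_{c,d,r}^{\times,+})$ coincide as subsets of $\mathfrak{a}$; equality of shape points in $\SL(2,\R)/\SL(2,\Z)$ then follows immediately from \cref{de:shape}. Since $-1\in\Z\subset\calO_{c,d,r}$ is a unit, the kernel of $\psi$ restricted to $\calO_{c,d,r}^\times$ is exactly $\{\pm 1\}$, and $\calO_{c,d,r}^{\times,+}\subset\calO_{c,d,r}^\times$ injects under $\psi$. It therefore suffices to show that $\calO_{c,d,r}^{\times}=\{\pm 1\}\cdot\calO_{c,d,r}^{\times,+}$, so that the two subgroups have the same image under $\psi$.

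By \cref{pr:proposition_polynom-units-orders} combined with \cref{le:control_the_roots}, the generators $b_1=\alpha$ and $b_2=\alpha-a_1$ carry signatures $(+,+,+)$ and $(-,-,+)$ respectively, so a unit $\pm b_1^m b_2^n$ is totally positive if and only if the chosen sign is $+$ and $n$ is even. The key step is to verify that this parity constraint is automatic for every $(m,n)\in L_{c,d,r}$: the third defining congruence in \cref{eq:defi_of_L} reads $2m-n\equiv 0\pmod{24\cdot 5^{r-1}}$, and since $2\mid 24$ this forces $n\equiv 0\pmod 2$. Combined with the description $\calO_{c,d,r}^{\times}=\{\pm b_1^m b_2^n \mid (m,n)\in L_{c,d,r}\}$ from \cref{prop:suborder}, it follows that $b_1^m b_2^n\in\calO_{c,d,r}^{\times,+}$ for every $(m,n)\in L_{c,d,r}$, so $\{\pm 1\}\cdot\calO_{c,d,r}^{\times,+}=\calO_{c,d,r}^{\times}$ and the two lattices in $\mathfrak{a}$ agree.

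I do not foresee any real obstacle beyond this parity observation; the coincidence ultimately traces back to the arithmetic accident that the prime $p_3=5$ contributes the even factor $l_3=24$ in the congruence description of \cref{prop:suborder}. As a consistency check, substituting $n'=2n$ into the three congruences defining $L_{c,d,r}'$ recovers exactly the three congruences defining $L_{c,d,r}$ together with the parity condition $n'\in 2\Z$, which is rendered superfluous by the observation above. This confirms that the explicit description of $\calO_{c,d,r}^{\times,+}$ given just before the lemma is compatible with the identification $\calO_{c,d,r}^{\times}=\{\pm 1\}\cdot\calO_{c,d,r}^{\times,+}$.
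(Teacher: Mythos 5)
Your argument is correct and is essentially the paper's own: both hinge on the observation that because $l_3=24$ is even, the third congruence defining $L_{c,d,r}$ forces $n$ to be even, so that $DL'_{c,d,r}=L_{c,d,r}$ with $D=\left(\begin{smallmatrix}1&0\\0&2\end{smallmatrix}\right)$, and thus every element $b_1^m b_2^n$ with $(m,n)\in L_{c,d,r}$ is already totally positive and $\psi(\pm x)=\psi(x)$ finishes it. You spell out the signature computation $(+,+,+)$ and $(-,-,+)$ for $b_1,b_2$ via \cref{le:control_the_roots}, which the paper leaves implicit, but the underlying mechanism is identical.
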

\begin{proof}
When we substitute $(p_1,p_2,p_3)=(2,3,5)$ like in the discussion in the beginning of \cref{ss:density}, we get the same values of $j_1,j_2,j_3,l_1,l_2,l_3$ as in the case of $L_{c,d,r}$ given in \cref{eq:defi_of_L}. Then, observe that $L_{c,d,r}$ and $L'_{c,d,r}$ differ only by the resubstitution $n \mapsto 2n$. This gives us 

$$ \left(
\begin{smallmatrix}
    1 & 0 \\
    0 & 2 
\end{smallmatrix}\right)
L_{c,d,r}' \subseteq L_{c,d,r}.$$

However, because $l_3 = 24$, the third condition in \cref{eq:defi_of_L} shows that $n$ must be even and in fact 
$$ \left(
\begin{smallmatrix}
    1 & 0 \\
    0 & 2 
\end{smallmatrix}\right)
L_{c,d,r}' = L_{c,d,r}.$$
In fact, what this means for $\calO^\times_{c,d,r}$ is that if $b_1,b_2$ are as in \cref{prop:suborder}, then $b_1^mb_2^n \in \calO^{\times}_{c,d,r} \Rightarrow b_1^m b_2^n \in \calO^{\times, +}_{c,d,r}$. Since the map $\psi:\calO \rightarrow \R$ defined in \cref{eq:defi_of_psi} maps $\psi(b_1^mb_2^n)=\psi(-b_1^mb_2^n)$, we can conclude the statement of the lemma.
\end{proof}

With these facts, the proof of \cref{co:ds} adapts directly to prove the following statement.
\begin{theorem}
    The set of shapes of periodic tori for the left action of the subgroup of diagonal matrices with positive entries $A \subseteq \SL(3,\R)$ on $\SL(3,\R)/\SL(3,\Z)$ is dense in $\SL(2,\R)/\SL(2,\Z)$.
\end{theorem}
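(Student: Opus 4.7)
The strategy is to adapt the proof of \cref{co:ds} from \cref{ss:proof_of_density} almost verbatim, exploiting the lemma immediately preceding the theorem, which asserts that for $(p_1, p_2, p_3) = (2,3,5)$ and every suborder $\calO_{c,d,r}$ in the family one has the equality $[\psi(\calO_{c,d,r}^{\times})] = [\psi(\calO_{c,d,r}^{\times,+})]$ in $\SL(2,\R)/\SL(2,\Z)$. By \cref{re:order positive}-\ref{reit:positive_units}, the shapes of periodic tori of $A \curvearrowright \SL(3,\R)/\SL(3,\Z)$ are exactly the shapes of the totally positive unit groups $\calO^{\times,+}$, so it suffices to approximate an arbitrary point of $\SL(2,\R)/\SL(2,\Z)$ by such shapes.

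I would reuse the same family of cubic orders constructed in the proof of \cref{co:ds}: for each $N \geq 10$, pick a pair $(a_{1,N}, a_{2,N})$ satisfying the congruence conditions modulo $(2^N, 3^N, 5^N)$ of \cref{prop:suborder} together with $a_{1,N}, a_{2,N} - a_{1,N} > C$, and form the suborders
$$\calO_{c,d,r}^{(a_{1,N}, a_{2,N})} = \Z + 2^c 3^d 5^r \, \calO^{(a_{1,N}, a_{2,N})}, \qquad c,d,r \leq N.$$
For each admissible $(N,c,d,r)$ the preceding lemma yields the identity of shape points
$$\big[ \psi\big(\calO_{c,d,r}^{(a_{1,N}, a_{2,N}), \times, +}\big) \big] \; = \; \big[ \psi\big(\calO_{c,d,r}^{(a_{1,N}, a_{2,N}), \times}\big) \big] .$$
The key feature that makes this equality hold along the whole family is that $l_3 = 24$ is even, so the third congruence in \cref{eq:defi_of_L} forces the second exponent to be even regardless of $a_1, a_2$; this collapses the index-$2$ discrepancy between $\calO^{\times,+}$ and $\calO^{\times}$ at the level of shape points, independently of the specific order $\calO^{(a_{1,N}, a_{2,N})}$ in the family.

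Finally, I would invoke \cref{co:ds} directly: the collection $\{[\psi(\calO_{c,d,r}^{(a_{1,N}, a_{2,N}), \times})]\}$ is dense in $\SL(2,\R)/\SL(2,\Z)$ as $(N,c,d,r)$ vary, and by the equalities of shape points above, so is the collection of totally positive shape points $\{[\psi(\calO_{c,d,r}^{(a_{1,N}, a_{2,N}), \times, +})]\}$. Since each such shape is by construction the shape of a periodic torus of the $A$-action on $\SL(3,\R)/\SL(3,\Z)$, the theorem follows. No new obstacle arises: the entire argument reduces to the preceding lemma and \cref{co:ds}. Had the choice of primes produced an odd $l_3$, the index-$2$ discrepancy would have persisted and an honest adaptation of the horospherical density argument of \cref{theorm_density} would have been required; as it stands, this work is already absorbed into the preceding lemma.
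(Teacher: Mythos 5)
Your proposal matches the paper's proof essentially verbatim: both reduce to showing density of shapes of $\calO^{\times,+}$ via \cref{re:order positive}--\ref{reit:positive_units}, apply the preceding lemma (equality $[\psi(\calO_{c,d,r}^{\times})]=[\psi(\calO_{c,d,r}^{\times,+})]$ for the $(2,3,5)$ family, driven by $l_3 = 24$ forcing the second exponent to be even), and then invoke the density construction of \cref{ss:proof_of_density}. The paper states this even more tersely ("the proof of \cref{co:ds} adapts directly"), so your write-up is a faithful, slightly more explicit rendering of the same argument.
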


	\bibliographystyle{alpha}
	
	\bibliography{references}

\begin{thebibliography}{GBGW24}

\bibitem[ABC56]{ankenyNoteClassNumbersAlgebraic1956}
N.~C. Ankeny, R.~Brauer, and S.~Chowla.
\newblock A {{Note}} on the {{Class-Numbers}} of {{Algebraic Number Fields}}.
\newblock {\em American Journal of Mathematics}, 78(1):51--61, 1956.

\bibitem[AR]{mathoverflow_disc}
{A}ndre {R}eznikov.
\newblock How random are unit lattices in number fields?
\newblock MathOverflow.
\newblock \url{https://mathoverflow.net/q/56153} (version: 2011-02-21).

\bibitem[BH16]{bhargava2016equidistribution}
Manjul Bhargava and Piper Harron.
\newblock The equidistribution of lattice shapes of rings of integers in cubic, quartic, and quintic number fields.
\newblock {\em Compositio Mathematica}, 152(6):1111--1120, 2016.

\bibitem[Cas52]{casselsProductInhomogeneousLinear1952}
J.~W.~S. Cassels.
\newblock The {{Product}} of n {{Inhomogeneous Linear Forms}} in n {{Variables}}.
\newblock {\em Journal of the London Mathematical Society}, s1-27(4):485--492, October 1952.

\bibitem[Con]{conradUnit}
Keith Conrad.
\newblock Dirichlet's unit theorem.

\bibitem[Cus84]{cusickLower1983}
T.~W. Cusick.
\newblock Lower bounds for regulators.
\newblock In Hendrik Jager, editor, {\em Number Theory Noordwijkerhout 1983}, pages 63--73, Berlin, Heidelberg, 1984. Springer Berlin Heidelberg.

\bibitem[Cus91]{cusickRegulatorSpectrumTotally1991}
T.~W. Cusick.
\newblock The regulator spectrum for totally real cubic fields.
\newblock {\em Monatshefte f{\"u}r Mathematik}, 112(3):217--220, September 1991.

\bibitem[Dei04]{deitmar}
A.~Deitmar.
\newblock A prime geodesic theorem for higher rank spaces.
\newblock {\em Geom. Funct. Anal.}, 14(6):1238--1266, 2004.

\bibitem[DL23]{dangEquidistributionCountingPeriodic2023}
Nguyen-Thi Dang and Jialun Li.
\newblock Equidistribution and counting of periodic tori in the space of {{Weyl}} chambers.
\newblock {\em to appear in Commentarii Mathematici Helvetici}, (arXiv:2305.17070), May 2023.

\bibitem[DS17]{davidShapesUnitLattices2017}
Ofir David and Uri Shapira.
\newblock Shapes of unit lattices and escape of mass.
\newblock {\em International Mathematics Research Notices}, page rnw324, January 2017.

\bibitem[ELMV09]{einsiedler_distribution_2009}
M.~Einsiedler, E.~Lindenstrauss, P.~Michel, and A.~Venkatesh.
\newblock Distribution of periodic torus orbits on homogeneous spaces.
\newblock {\em Duke Mathematical Journal}, 148(1):119--174, May 2009.
\newblock Publisher: Duke University Press.

\bibitem[ELMV11]{einsiedler_distribution_2011}
M.~Einsiedler, E.~Lindenstrauss, P.~Michel, and A.~Venkatesh.
\newblock Distribution of periodic torus orbits and {Duke}'s theorem for cubic fields.
\newblock {\em Annals of Mathematics. Second Series}, 173(2):815--885, 2011.

\bibitem[GBGW24]{bonthonneau_srb_2021}
Yannick Guedes~Bonthonneau, Colin Guillarmou, and Tobias Weich.
\newblock {{SRB}} measures for {{Anosov}} actions.
\newblock {\em Journal of Differential Geometry}, 128(3):959--1026, November 2024.

\bibitem[Hub59]{huberZurAnalytischenTheorie1959}
H.~Huber.
\newblock Zur analytischen {Theorie} hyperbolischen {Raumformen} und {Bewegungsgruppen}.
\newblock {\em Mathematische Annalen}, 138:1--26, 1959.

\bibitem[KM96]{kleinbock1996bounded}
DY~Kleinbock and Gregori~Aleksandrovich Margulis.
\newblock Bounded orbits of nonquasiunipotent flows on homogeneous spaces.
\newblock {\em American Mathematical Society Translations}, pages 141--172, 1996.

\bibitem[Kni05]{knieper2005uniqueness}
G.~Knieper.
\newblock The uniqueness of the maximal measure for geodesic flows on symmetric spaces of higher rank.
\newblock {\em Israel Journal of Mathematics}, 149(1):171--183, 2005.

\bibitem[Mar69]{margulisCertainApplicationsErgodic1969}
G.~A. Margulis.
\newblock Certain applications of ergodic theory to the investigation of manifolds of negative curvature.
\newblock {\em Akademija Nauk SSSR. Funkcional{\textbackslash}cprime nyi Analiz i ego Priloženija}, 3(4):89--90, 1969.

\bibitem[Rem52]{remakUberGrossenbeziehungenZwischen1952}
Robert Remak.
\newblock {{\"U}ber Gr{\"o}{\ss}enbeziehungen zwischen Diskriminante und Regulator eines algebraischen Zahlk{\"o}rpers}.
\newblock {\em Compositio Mathematica}, 10:245--285, 1952.

\bibitem[Sar80]{sarnak1980}
P.~Sarnak.
\newblock {\em Prime geodesic theorems}.
\newblock PhD thesis, Stanford University, 1980.

\bibitem[Sil84]{silvermanInequalityRelatingRegulator1984}
Joseph~H. Silverman.
\newblock An inequality relating the regulator and the discriminant of a number field.
\newblock {\em Journal of Number Theory}, 19(3):437--442, December 1984.

\bibitem[Spa83]{spatzier83}
R.~J. Spatzier.
\newblock {\em Dynamical properties of algebraic systems, a study in closed geodesics}.
\newblock PhD thesis, Warwick, 1983.

\end{thebibliography}
	
	\bigskip  

	  N.T.~Dang, \textsc{Institut de Mathématique d'Orsay,
    Paris}\par\nopagebreak
  \textit{E-mail address}:  \texttt{nguyen-thi.dang@universite-paris-saclay.fr }

  \medskip

  N.~Gargava, \textsc{Institut de Mathématique d'Orsay,
    Paris}\par\nopagebreak
  \textit{E-mail address}:  \texttt{nihar.gargava@universite-paris-saclay.fr}

  \medskip

  J.~Li, \textsc{CNRS and CMLS, École Polytechnique,
    Paris}\par\nopagebreak
  \textit{E-mail address}:  \texttt{jialun.li@polytechnique.edu}

\end{document}